\def\N{\mathbb N}
\def\Z{\mathbb Z}
\def\R{\mathbb R}
\def\Q{\mathbb Q}
\def\Finmb{\text{Fin}(-\beta)}

\def\Zmb{{\mathbb Z}_{-\beta}}
\def\A{\mathcal A}
\def\ol{\overline}
\def\pfz{\begin{proof}}
\def\pfk{\end{proof}}

\documentclass[11pt]{article}
\usepackage{graphicx, amssymb, amsmath,amsthm}


\textwidth6.3in \oddsidemargin-0.1cm
\textheight8.5in
\topmargin-0.3cm

\newtheorem{thm}{Theorem}[section]
\newtheorem{lem}[thm]{Lemma}

\newtheorem{ex}[thm]{Example}
\newtheorem{de}[thm]{Definition}

\begin{document}

\title{Arithmetics in numeration systems\\ with negative quadratic base}
\author{Z. Mas\'akov\'a\footnote{e-mail: zuzana.masakova@fjfi.cvut.cz}, T. V\'avra\\[1mm]
{\normalsize Department of Mathematics FNSPE, Czech Technical University in Prague}\\
{\normalsize Trojanova 13, 120 00 Praha 2, Czech Republic}}
\date{}

\maketitle

\begin{abstract}
We consider positional numeration system with negative base $-\beta$, as introduced by Ito and Sadahiro.
In particular, we focus on arithmetical properties of such systems when $\beta$ is a quadratic Pisot number.
We study a class of roots $\beta>1$ of polynomials $x^2-mx-n$, $m\geq n\geq 1$, and show that
in this case the set ${\rm Fin}(-\beta)$ of finite $(-\beta)$-expansions is closed under addition, although it is not closed under
subtraction. A particular example is $\beta=\tau=\frac12(1+\sqrt5)$, the golden ratio. For such $\beta$, we determine the
exact bound on the number of fractional digits appearing in arithmetical operations. We also show that the set of $(-\tau)$-integers coincides on the
positive half-line with the set of $(\tau^2)$-integers.
\end{abstract}

\section{Introduction}

In practically all fields of applied sciences one meets problems requiring efficient computational methods. An indispensable key for
developing such methods is to have fast algorithms for performing arithmetical operations with high precision. The first of the two
aspects --- speed --- can be reached for example using parallelization of algorithms for addition and multiplication. However, it has been
shown~\cite{mazenc} that this can only be achieved allowing redundancy in number representation.
The second aspect --- accuracy --- calls for special treatment
of different classes of irrational numbers by using e.g. exact arithmetics in algebraic number fields.
All this motivates the study of non-standard number systems.

Usually, one represents numbers in the standard positional number system with base 10 or base 2, (the so-called decimal or binary representation of numbers).
Changing the base for any integer $b\geq 2$ does not bring much new. In 1957, R\'enyi~\cite{Renyi} introduced the possibility of representing numbers in a system with non-integer base $\beta>1$. For every non-negative real number $x$, we have the $\beta$-expansion of $x$ of the form
$$
x=\sum_{i=-\infty}^{k} x_i\beta^i\,,\qquad x_i\in\{0,1,\dots,\lceil\beta\rceil-1\}\,,
$$
where the digits $x_i$ are obtained by the greedy algorithm. In analogy with standard numeration with integer base, we define the set $\Z_\beta$ of $\beta$-integers, which have vanishing digits at negative powers of $\beta$. We also define the set ${\rm Fin}(\beta)$ of numbers with finite $\beta$-expansion,
i.e.\ numbers whose  $\beta$-expansion has only finitely many non-zero digits.

Many new interesting phenomena appear when considering $\beta$-expansions for non-integer base $\beta$. For example, whereas in base $b\in\N$, every finite string of non-negative integers $< b$ is admissible as the greedy expansion of some $x$, in base $\beta\notin\N$, this is no longer true. The conditions  for admissibility of digit strings for general base $\beta$ has been given by Parry~\cite{Parry} using the lexicographical ordering of digit strings.

But probably the most remarkable novelty is that $\Z_\beta$ is no longer equal to the set $\Z$ of rational integers; its elements are not equidistant on the real line and $\Z_\beta$ is not a ring (i.e.\ closed under addition and multiplication), as it is the case for $\Z$. Even more strange, addition of $\beta$-integers may result in an infinite $\beta$-expansion. Such properties were studied by many authors. Among the most important results on arithmetics with $\beta$-expansions is Schmidt's description of bases $\beta$ for which rational numbers have periodic $\beta$-expansions~\cite{schmidt}, or the necessary condition on $\beta$, so that ${\rm Fin(\beta)}$ is a ring, given by Frougny and Solomyak~\cite{FruSo}. Others have studied the fractional part appearing in arithmetical operations with $\beta$-expansions, see e.g.~\cite{BuFrGaKr,GuMaPeAA,Bernat}. On-line computability of arithmetic operations was studied in~\cite{Fru,FruSu}.
Let us note that many questions about arithmetics in the numeration systems with positive real base $\beta$ remain open.

Recently, Ito and Sadahiro~\cite{ItoSadahiro} suggested to study positional systems with negative base $-\beta$, where $\beta>1$. Here one obtains a representation of every (both positive or negative) real number in the form
$$
x=\sum_{i=-\infty}^{k} x_i(-\beta)^i\,,\qquad x_i\in\{0,1,\dots,\lfloor\beta\rfloor\}\,.
$$
Ito and Sadahiro have provided a condition for admissibility of digit strings as $(-\beta)$-expansions and shown some properties of the dynamical system connected to $(-\beta)$-numeration. Their work on dynamical aspects has been continued in~\cite{ChiaraFrougny}.
The authors of~\cite{ADMP} define the set $\Z_{-\beta}$ of $(-\beta)$-integers and focus on its geometrical features. Some arithmetical properties of $(-\beta)$-numeration systems are studied in~\cite{MaPeVa}. Among other, the validity of a conjecture of Ito and Sadahiro is established, which states that
if $\beta>1$ is the root of $x^2-mx+n$, $m,n\in\N$, $m\geq n+2\geq 3$, then the set of ${\rm Fin}(-\beta)$ of finite $(-\beta)$-expansions is a ring. One also provides bounds on the length of the fractional part arising by adding and multiplying $(-\beta)$-integers, this in case that $\beta$ is a root of $x^2-mx-1$ for $m\geq 2$, and that $\beta$ is a root of $x^2-mx+1$ for $m\geq 3$.

In the present paper we complete the arithmetical study for quadratic negative bases started in~\cite{MaPeVa}.
In particular, we focus on roots $\beta>1$ of polynomials $x^2-mx-n$, $m\geq n\geq 1$, and show that
in this case the set ${\rm Fin}(-\beta)$ of finite $(-\beta)$-expansions is closed under addition, although it is not closed under
subtraction. We also provide exact bound on the number of fractional digits appearing in arithmetical operations for the golden ratio $\tau=\frac12(1+\sqrt5)$,
which is a case missing in the study~\cite{MaPeVa}.
We also prove a curious coincidence between $(-\tau)$-integers and $(\tau^2)$-integers. For that, we need to describe the distances between consecutive
$(-\tau)$-integers and morphism under which the infinite word coding their ordering is invariant.

\section{Positive base number system}

Let $\beta>1$. The R\'enyi $\beta$-expansion of a real number $x\in[0,1)$ can be found as a coding of the orbit of the point $x$
under the transformation $T_\beta:[0,1)\mapsto[0,1)$, given by the prescription
$T_\beta(x):=\beta x-\lfloor\beta x\rfloor$. Every $x\in[0,1)$ is a sum of the infinite series
\begin{equation}\label{eq:1}
x=\sum_{i=1}^\infty\frac{x_i}{\beta^i}\,,\qquad\hbox{where }\ x_i=\lfloor\beta T_\beta^{i-1}(x)\rfloor\quad\hbox{for }i=1,2,3,\dots
\end{equation}
Directly from the definition of the transformation $T_\beta$ we can derive that the digits $x_i$ take values in the set $\{0,1,2,\cdots,\lceil\beta\rceil -1\}$ for $i=1,2,3,\cdots$.

\begin{de}\label{de:betaexpansion}
The expression of $x$ in the form~\eqref{eq:1} is called the $\beta$-expansion of $x\in[0,1)$. The number $x$ is thus represented by the infinite word
$$
d_\beta(x)=x_1x_2x_3\cdots\in{\mathcal A}^\N
$$
over the alphabet $\A=\{0,1,2,\dots,\lceil\beta\rceil-1\}$.
\end{de}

From the definition of the transformation $\beta$ we can derive another important property, namely that the ordering on real numbers is carried over to
the ordering of $\beta$-expansions. In particular, we have for $x,y\in[0,1)$ that
$$
x\leq y \quad\iff\quad d_\beta(x) \preceq d_\beta(y)\,,
$$
where $\preceq$ is the lexicographical order on $\A^{\N}$, (ordering on the alphabet $\A$ is usual, $0<1<2<\cdots<\lceil\beta\rceil-1$).

In~\cite{Parry}, Parry has provided a criterion which decides whether an infinite word in $\A^\N$ is or is not the $\beta$-expansion of some real number $x$.
The criterion is formulated using the so-called infinite expansion of $1$, denoted by $d^*_\beta(1)$, defined as a limit in the space $\A^\N$ equipped with the product topology,
$$
d_\beta^*(1):=\lim_{\varepsilon\to0+}d_\beta(1-\varepsilon)\,.
$$
According to Parry, the string $x_1x_2x_3\cdots \in\A^\N$ represents the $\beta$-expansion of a number $x\in[0,1)$ if and only if
\begin{equation}\label{eq:Parry}
x_ix_{i+1}x_{i+2}\cdots \prec d_\beta^*(1)\quad \hbox{ for every }\ i=1,2,3,\cdots\,.
\end{equation}


The notion of $\beta$-expansion can be naturally extended to all non-negative real numbers:

\begin{de}\label{de:betaexpansionreal}
Let $\beta>1$ and $x\geq 0$. The expression
\begin{equation}
x=x_k\beta^{k}+x_{k-1}\beta^{k-1}+x_{k-2}\beta^{k-2}+\cdots\,,\quad \text{where $k\in\Z$ and $x_i\in\Z$ for $i\leq k$}\,,
\end{equation}
is a $\beta$-representation of $x$. The $\beta$-expansion of $x$ is the particular
$\beta$-representation satisfying  $x_kx_{k-1}x_{k-2}\cdots= d_\beta(y)$ for some $y\in[0,1)$.
\end{de}

Note that Definition~\ref{de:betaexpansionreal} of $\beta$-expansion is in accordance with Definition~\ref{de:betaexpansion}.
The $\beta$-representation of $x$ is sometimes written as the digit string
\begin{equation}
\begin{array}{ll}
x_k\cdots x_0\bullet x_{-1}x_{-2}\cdots\,,\qquad&\text{for $k\geq 0$, or}\\[2mm]
0\bullet 0^{-k-1}x_k x_{k-1}\cdots\,,\qquad&\text{for $k<0$,}
\end{array}
\end{equation}
with the notation $0^{j}$ standing for $j$ digits 0 repeated.
If the $\beta$-representation is in the same time the $\beta$-expansion of $x$, we denote the cooresponding digit string
by $\langle x\rangle_\beta$.

When the digit string $x_kx_{k-1}x_{k-2}\cdots$ ends in infinitely many 0's, we say that
the $\beta$-expansion is finite and omit the ending 0's.
Real numbers $x$ having in their $\beta$-expansion vanishing digits $x_i$ for all $i<0$ are usually called $\beta$-integers and the set of $\beta$-integers
is denoted by $\Z_\beta$,
$$
\Z_\beta=\big\{x\in\R \,\big|\, \langle |x|\rangle_\beta=x_k\cdots x_0\bullet \big\}\,.
$$
The set of numbers with finite $\beta$-expansion is then
$$
{\rm Fin(\beta)}= \bigcup_{k\in\Z} \beta^k\Z_\beta\,.
$$
The set ${\rm Fin(\beta)}$ is in general not closed under addition and multiplication. The description of bases $\beta$, for which ${\rm Fin(\beta)}$ is a ring,
is a very difficult open question. Only partial results are known.

One can study the arithmetics on $\beta$-expansions in more detail: even though addition or multiplication of two $\beta$-integers may result in an infinite $\beta$-expansion, one can define the following quantities,
\begin{equation}\label{eq:LLkladnabaze}
\begin{aligned}
L_\oplus(\beta)&=\min\{l\in\N\mid \forall\,x,y\in\Z_{\beta},\ x+y\in{\rm Fin}(\beta)\Rightarrow x+y\in\beta^{-l}\Z_{\beta}\}\,,\\
L_\otimes(\beta)&=\min\{l\in\N\mid \forall\,x,y\in\Z_{\beta},\ x\cdot y\in{\rm Fin}(\beta)\Rightarrow x\cdot y\in\beta^{-l}\Z_{\beta}\}\,.
\end{aligned}
\end{equation}
describing the maximal length of finite fractional part possibly arising when summing, resp. multiplying $\beta$-integers.

\begin{ex}
Consider for the base of the numeration system the golden ratio $\tau=\frac12(1+\sqrt5)$, root of the quadratic polynomial $x^2-x-1$. Every $x\geq 0$ has its $\tau$-expansion of the form
$$
x=\sum_{i=-\infty}^{k} x_i\tau^i\,,\qquad x_i\in\{0,1\}\,.
$$
where according to the Parry condition~\eqref{eq:Parry}, the digit sequence $x_ix_{i-1}x_{i-2}\cdots$ for every $i\leq k$ satisfies
$$
x_ix_{i-1}x_{i-2}\cdots \prec (10)^\omega\,.
$$
Here $(10)^\omega$ denotes infinite repetition of the string $10$. This condition can be reformulated in a more comprehensible way: the digit sequence $x_kx_{k-1}x_{k-2}\cdots$ does not end in an infinite repetition of $10$ and does not contain two consecutive digits 1.
The latter requirement is intuitively obvious, since the greedy algorithm prefers replacing the digit string $011$ by $100$, in accordance with the equation $\tau^{i+2}=\tau^{i+1}+\tau^i$.

The set $\Z_\tau$ of $\tau$-integers can be expressed as
$$
\begin{aligned}
\Z_\tau &= \left\{\pm\sum_{i=0}^k x_i\tau^i \ \biggm|\ x_i\in\{0,1\},\ x_i\cdot x_{i+1}=0\right\} = \\
&=\pm\big\{0,1,\tau,\tau^2,\tau^2+1,\tau^3,\tau^3+1,\tau^3+\tau,\tau^4,\tau^4+1,\dots\big\}\,.
\end{aligned}
$$
Drawn on the real line, we see that the distances between consecutive $\tau$-integers take values $1$ and $\tau^{-1}$, see Figure~\ref{f}.

\begin{figure}[h]
\begin{center}
\setlength{\unitlength}{0.8pt}
\begin{picture}(430,59)
\put(0,23){\line(1,0){418}}
\put(22,30){$\overbrace{\hspace*{44\unitlength}}^{1}$}
\put(70,30){$\overbrace{\hspace*{26\unitlength}}^{1/\tau}$}
\put(100,30){$\overbrace{\hspace*{44\unitlength}}^{1}$}
\put(148,30){$\overbrace{\hspace*{44\unitlength}}^{1}$}
\put(196,30){$\overbrace{\hspace*{26\unitlength}}^{1/\tau}$}
\put(226,30){$\overbrace{\hspace*{44\unitlength}}^{1}$}
\put(274,30){$\overbrace{\hspace*{26\unitlength}}^{1/\tau}$}
\put(304,30){$\overbrace{\hspace*{44\unitlength}}^{1}$}
\put(352,30){$\overbrace{\hspace*{44\unitlength}}^{1}$}
\put(20,21){\line(0,1){4}}
\put(20,5){$0$}
\put(68,21){\line(0,1){4}}
\put(66,5){$1$}
\put(98,21){\line(0,1){4}}
\put(96,5){$\tau$}
\put(146,21){\line(0,1){4}}
\put(143,5){$\tau^2$}
\put(194,21){\line(0,1){4}}
\put(178,5){$\tau^2\!\!+\!1$}
\put(224,21){\line(0,1){4}}
\put(224,5){$\tau^3$}
\put(272,21){\line(0,1){4}}
\put(255,5){$\tau^3\!\!+\!1$}
\put(302,21){\line(0,1){4}}
\put(296,5){$\tau^3\!\!+\!\tau$}
\put(350,21){\line(0,1){4}}
\put(348,5){$\tau^4$}
\put(398,21){\line(0,1){4}}
\put(387,5){$\tau^4\!\!+\!1$}
\end{picture}
\end{center}
\caption{Several smallest non-negative $\tau$-integers drawn on the real line.}\label{f}
\end{figure}
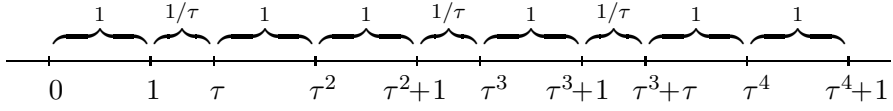

The $\tau$-expansions of the smallest few non-negative $\tau$-integers are given in the following table.
Note that they are increasing in the short-lex order.
$$
\begin{array}{r@{\ }c@{\ }r@{\,,\hspace*{0.8cm}}r@{\ }c@{\ }r}
\langle 0 \rangle_\tau &=& 0\bullet & \langle \tau^3 \rangle_\tau &=& 1000\bullet\,,\\[1mm]
\langle 1 \rangle_\tau &=& 1\bullet & \langle \tau^3+1 \rangle_\tau &=& 1001\bullet\,,\\[1mm]
\langle \tau \rangle_\tau &=& 10\bullet & \langle \tau^3+\tau \rangle_\tau &=& 1010\bullet\,,\\[1mm]
\langle \tau^2 \rangle_\tau &=& 100\bullet & \langle \tau^4 \rangle_\tau &=& 10000\bullet\,,\\[-1mm]
\langle \tau^2+1 \rangle_\tau &=& 101\bullet & &\vdots&
\end{array}
$$

It has been shown in~\cite{BuFrGaKr} that the quantities $L_\oplus(\tau)$, $L_\otimes(\tau)$ defined by~\eqref{eq:LLkladnabaze} take values
$L_\oplus(\tau)=L_\otimes(\tau)=2$. As an example, we can consider
$$
x=y=1 \quad\Rightarrow\quad x+y=1+1=2=\tau+\tau^{-2}=10\bullet01\,.
$$
For multiplication,
$$
x=y=\tau^2+1 \quad\Rightarrow\quad xy=(\tau^2+1)^2=\tau^5+\tau+\tau^{-2}=100010\bullet01\,.
$$
\end{ex}


\section{Negative base number system}

In analogy to the R\'enyi expansion of numbers using the transformation of the interval $[0,1)$, Ito and Sadahiro have defined the $(-\beta)$-expansion of numbers using the transformation $T_{-\beta}:[l_\beta,r_\beta)\mapsto[l_\beta,r_\beta)$, where
$$
l_\beta=-\frac{\beta}{\beta+1},\quad r_\beta=1+l_\beta=\frac1{\beta+1}\,.
$$
The transformation $T_{-\beta}$ is defined by
\begin{equation}\label{eq:7}
T_{-\beta}(x):=-\beta x -\lfloor-\beta x - l_\beta\rfloor\,.
\end{equation}
Every real $x\in [l_\beta,r_\beta)$ can be written as
\begin{equation}\label{eq:4}
x=\sum_{i=1}^\infty\frac{x_i}{(-\beta)^i}\,,\qquad\hbox{where }\ x_i=\lfloor-\beta T_{-\beta}^{i-1}(x)-l_\beta\rfloor\quad\hbox{for }i=1,2,3,\dots
\end{equation}

\begin{de}\label{de:-betaexpansion}
We call the expression~\eqref{eq:4} the Ito-Sadahiro $(-\beta)$-expansion of $x\in [l_\beta,r_\beta)$. We denote the corresponding digit string by
$$
d_{-\beta}(x)=x_1x_2x_3\cdots\,.
$$
\end{de}

One can easily show from~\eqref{eq:7} that the digits $x_i$, $i\geq 1$, take values in the set $\A=\{0,1,2,\dots,\lfloor\beta\rfloor\}$.
In this case, the ordering on the set of infinite words over the alphabet $\A$ which would correspond to the ordering of real numbers is the so-called alternate ordering: We say that
$$
x_1x_2x_3\cdots \prec_{\hbox{\tiny alt}} y_1y_2y_3\cdots
$$
if for the minimal index $j$ such that $x_j\neq y_j$ it holds that $x_j(-1)^j<y_j(-1)^j$.
In this notation, we can write for arbitrary $x,y\in [l_\beta,r_\beta)$ that
$$
x\leq y \quad\iff\quad d_{-\beta}(x) \preceq_{\hbox{\tiny alt}} d_{-\beta}(y)\,.
$$

In their paper, Ito and Sadahiro have provided a criterion to decide whether an infinite word over $\A^\N$ is admissible as $d_{-\beta}(x)$ for some $x\in [l_\beta,r_\beta)$. The criterion is given using two infinite words, namely
$$
d_{-\beta}(l_\beta)\quad\hbox{ and }\quad d^*_{-\beta}(r_\beta):=\lim_{\varepsilon\to0+} d_{-\beta}(r_\beta-\varepsilon)\,.
$$
These two infinite words have close relation: If $d_{-\beta}(l_\beta)$ is purely periodic with odd period length, i.e.
$d_{-\beta}(l_\beta)=(d_1d_2\cdots d_{2k+1})^\omega$, then $d_{-\beta}^*(r_\beta)=\big(0d_1d_2\cdots (d_{2k+1}-1)\big)^\omega$.
(As usual, the notation $v^\omega$ stands for infinite repetition of the string $v$.)
In all other cases one has $d_{-\beta}^*(r_\beta)=0d_{-\beta}(l_\beta)$.

Ito and Sadahiro have shown that an infinite word $x_1x_2x_3\cdots$ represents $d_{-\beta}(x)$ for some $x\in[l_\beta,r_\beta)$ if and only if
for every $i\geq 1$ it holds that
\begin{equation}\label{eq:admissIS}
d_{-\beta}(l_\beta) \preceq_{\hbox{\tiny alt}} x_ix_{i+1}x_{i+2}\cdots \prec_{\hbox{\tiny alt}} d_{-\beta}^*(r_\beta)\,.
\end{equation}

We now provide the definition of $(-\beta)$-expansions of every real number $x$. (Note that in the negative base number system we can represent
negative numbers using non-negative digits without need of sign.)

In~\cite{ItoSadahiro} it is suggested to find the expansion of a number $x\notin [l_\beta,r_\beta)$ by dividing it by a suitable power of $(-\beta)$
so that $y:=(-\beta)^{-k}x\in[l_\beta,r_\beta)$, finding the expansion of $y$ and multiplying it back by $(-\beta)^k$. The expression for $x$ provided by such procedure, however, depends on chosen $k$, so the prescription must be modified, in order to give a unique $(-\beta)$-expansion for every real $x$.

\begin{de}\label{de:-betaexpansionreal}
Let $\beta>1$ and $x\in\R$. The expression
\begin{equation}
x=x_k(-\beta)^{k}+x_{k-1}(-\beta)^{k-1}+x_{k-2}(-\beta)^{k-2}+\cdots\,,\quad \text{where $k\in\Z$ and $x_i\in\Z$ for $i\leq k$}\,,
\end{equation}
is a $(-\beta)$-representation of $x$. The $(-\beta)$-expansion of $x$ is the particular
$(-\beta)$-representation satisfying  $x_kx_{k-1}x_{k-2}\cdots= d_{-\beta}(y)$ for some $y\in(l_\beta,r_\beta)$.
\end{de}

Again, we write the $(-\beta)$-representations using the corresponding digit strings where the symbol $\bullet$ stands for fractional point
separating the digits at non-negative and negative powers of the base,
$$
\begin{array}{ll}
x_k\cdots x_0\bullet x_{-1}x_{-2}\cdots\,,\qquad&\text{for $k\geq 0$, or}\\[2mm]
0\bullet 0^{-k-1}x_k x_{k-1}\cdots\,,\qquad&\text{for $k<0$.}
\end{array}
$$
If the digit string corresponds to the $(-\beta)$-expansion of $x$, we denote it by $\langle x\rangle_{-\beta}$.

Note that for the sake of uniqueness, in Definition~\ref{de:-betaexpansionreal} we do not allow  a
$(-\beta)$-expansion to be given by $d_{-\beta}(l_\beta)$. This is
because of the following undesirable phenomena: Denoting
$d_{-\beta}(l_\beta)=d_1d_2d_3\cdots$, we have the equality
$$
\begin{aligned}
(-\beta)&+d_1(-\beta)^0+\frac{d_2}{(-\beta)}+\frac{d_3}{(-\beta)^2}+\frac{d_4}{(-\beta)^3}+\cdots = \\
&= \frac{d_1}{(-\beta)}+\frac{d_2}{(-\beta)^2}+\frac{d_3}{(-\beta)^3}+\frac{d_4}{(-\beta)^4}+\cdots
\end{aligned}
$$
where both $1d_1d_2d_3\cdots$ and $d_1d_2d_3\cdots$ are admissible digit strings. But whereas $01d_1d_2d_3\cdots$
is also admissible, $0d_1d_2d_3\cdots$ is not, and so we prefer to define the $(-\beta)$-expansion of $l_\beta$
as
$$
\langle l_\beta\rangle_{-\beta} = 1d_1\bullet d_2d_3\cdots\,.
$$

Similarly as in the case of positive base numeration, we define the $(-\beta)$-integers, forming the set
$$
\Z_{-\beta}=\big\{x\in\R \,\big|\, \langle x\rangle_{-\beta}=x_k\cdots x_0\bullet \big\}\,.
$$
The set of numbers with finite $(-\beta)$-expansion is defined by
$$
{\rm Fin(-\beta)}= \bigcup_{k\in\Z} (-\beta)^k\Z_{-\beta}\,.
$$

We also define the quantities describing maximal length of fractional part arising when summing or multiplying $(-\beta)$-integers,
\begin{equation}\label{eq:LLzapornabaze}
\begin{aligned}
L_\oplus(-\beta)&=\min\{l\in\N\mid \forall\,x,y\in\Z_{-\beta},\ x+y\in{\rm Fin}(-\beta)\Rightarrow x+y\in(-\beta)^{-l}\Z_{-\beta}\}\,,\\
L_\otimes(-\beta)&=\min\{l\in\N\mid \forall\,x,y\in\Z_{-\beta},\ x\cdot y\in{\rm Fin}(-\beta)\Rightarrow x\cdot y\in(-\beta)^{-l}\Z_{-\beta}\}\,.
\end{aligned}
\end{equation}

\section{Pisot numbers}

Since the present study focuses on a special class of algebraic numbers, let us recall some number-theoretical notions needed.
A complex number $\beta$ is called an algebraic number, if it is a root of a monic polynomial $f(x)=x^n + a_{n-1}x^{n-1}+\cdots +a_1x + a_0$, with rational coefficients $a_0,\dots,a_{n-1}\in\Q$. If $f$ has the minimal degree among all polynomials satisfying $g\in\Q[x]$, $g(\beta)=0$, then
 it is called the minimal polynomial of $\beta$ and the degree of $f$ is called the degree of $\beta$. The other roots of the minimal polynomial are the algebraic conjugates of $\beta$.

If the minimal polynomial of $\beta$ has integer coefficients, $\beta$ is called an algebraic integer.
An algebraic integer $\beta>1$ is called a Pisot number, if all its conjugates are in modulus strictly smaller than $1$.

Let $\beta$ be an algebraic number of degree $r$. The minimal subfield of the field of complex numbers containing $\beta$ is denoted by $\Q(\beta)$
and is of the form
$$
\Q(\beta) = \{c_0+c_1\beta + \cdots + c_{r-1}\beta^{r-1} \mid c_i\in\Q\}\,.
$$
If $\gamma$ is a conjugate of an algebraic number $\beta$, then the fields $\Q(\beta)$ and $\Q(\gamma)$ are isomorphic. The corresponding isomorphism
$\sigma:\Q(\beta)\mapsto\Q(\gamma)$ is given by the prescription
\begin{equation}\label{eq:isomorphism}
\sigma:\quad c_0+c_1\beta + \cdots + c_{r-1}\beta^{r-1} \quad\mapsto\quad c_0+c_1\gamma + \cdots + c_{r-1}\gamma^{r-1}\,.
\end{equation}
In particular, it means that $\beta$ is a root of some polynomial $f$ with rational coefficients if and only if $\gamma$ is a root of the same polynomial $f$.

In the field of numeration systems with non-integer bases one often meets a special class of algebraic numbers, namely Pisot numbers. A nice result (see~\cite{schmidt}) is
that Pisot numbers have eventually periodic infinite $\beta$-expansion of 1 (cf.\ Parry condition~\eqref{eq:Parry}). Similarly, in~\cite{ChiaraFrougny} it
is shown that Pisot numbers have also eventually periodic $d_{-\beta}(l_\beta)$, which is useful in the admissibility condition of $(-\beta)$-expansions (cf.~\eqref{eq:admissIS}).

It is known~\cite{Bassino} that among the quadratic numbers, the only ones with eventually periodic infinite $\beta$-expansion of 1 are quadratic Pisot numbers. Similarly,
quadratic Pisot numbers are the only quadratic numbers with eventually periodic $d_{-\beta}(l_\beta)$, see~\cite{MaPeVa}.
It is easy to show that quadratic Pisot numbers are precisely the larger roots of polynomials
$$
\begin{array}{ll}
x^2-mx-n\,,\quad &m,n\in\N,\ m\geq n\geq 1,\\[2mm]
x^2-mx+n\,,\quad &m,n\in\N,\ m\geq n+2\geq 3.
\end{array}
$$
The corresponding  infinite $\beta$-expansions of 1 are
$$
d_\beta^*(1) = \big(m(n-1)\big)^\omega,\quad d_\beta^*(1) = (m-1)(m-n-1)^\omega,\quad \text{respectively.}
$$
For the $(-\beta)$-expansion of $l_\beta$, one obtains
$$
d_{-\beta}(l_\beta) = m(m-n)^\omega,\quad d_{-\beta}(l_\beta) = \big((m-1)n\big)^\omega,\quad \text{respectively.}
$$

\section{Arithmetics in systems with quadratic negative base}

Let us now consider the arithmetical properties of the number
system with base $-\beta$, where $\beta$ belongs to the class of
quadratic Pisot numbers with minimal polynomial $x^2-mx-n$, $m\geq
n\geq 1$. The condition~\eqref{eq:admissIS} of admissibility of
digit strings is now stated using
\begin{equation}\label{eq:admissquadr}
d_{-\beta}(l_\beta)=m(m-n)^\omega \quad\text{ and }\quad
d_{-\beta}^*(r_\beta)= 0m(m-n)^\omega\,.
\end{equation}
As mentioned already in~\cite{MaPeVa}, the set of finite
$(-\beta)$-expansions is not a ring in this case, since for $x=0$,
$y=1$, we have $x,y\in{\rm Fin}(-\beta)$, but $x-y=-1\notin{\rm
Fin}(-\beta)$. For, the $(-\beta)$-expansion of the number $-1$ is
equal to
$$
\langle -1\rangle_{-\beta} = 1m\bullet(m-n+1)^\omega\,.
$$
Nevertheless, this fact does not prevent ${\rm Fin}(-\beta)$ to be
closed under addition, as we shall prove here
(Theorem~\ref{t:uzavrenost}). For that, let us first describe the
set of finite expansions using some combinatorial property. The
following statement can be easily derived from the admissibility
condition~\eqref{eq:admissIS} using~\eqref{eq:admissquadr}.

\begin{lem}\label{l:zakazane}
Let $\beta>1$ be root of $x^2-mx-n$, $m\geq n\geq 1$. Let
$x_i\in\{0,1,\dots,m\}$, for $i\leq N$, where only finitely many $x_i$ are non-zero.

If $m>n$, then $x=\sum_{i=-\infty}^Nx_i(-\beta)^i$ is the
$(-\beta)$-expansion of $x$ if and only if $x_Nx_{N-1}\cdots$
does not contain strings
\begin{align}
m\,(m-n)^{2k}C\,,\quad C\leq m-n-1\,,\ k\in\N_0\,,\label{zakazane:plusn1}\\
m\,(m-n)^{2k+1}D\,,\quad D\geq m-n+1\,,\ k\in\N_0\,.\label{zakazane:plusn2}
\end{align}

If $m=n$, then $x=\sum_{i=-\infty}^Nx_i(-\beta)^i$ is the
$(-\beta)$-expansion of $x$ if and only if $x_Nx_{N-1}\cdots$
does not contain the string
\begin{align}\label{zakazane:plusn3}
&m\,0^{2k+1}D\,,\quad D\geq 1\,,\ k\in\N_0\,,
\end{align}
and it does not end with the string $0\,m\,0^\omega$.
\end{lem}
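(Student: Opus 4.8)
The plan is to translate the Ito--Sadahiro admissibility condition~\eqref{eq:admissIS} with the data~\eqref{eq:admissquadr} into the combinatorial conditions stated, by unfolding what the alternate order comparisons say about every suffix of the digit string. Since only finitely many $x_i$ are non-zero, every tail $x_ix_{i-1}\cdots$ eventually becomes $0^\omega$, so the comparison with $d_{-\beta}^*(r_\beta)=0m(m-n)^\omega$ on the right and $d_{-\beta}(l_\beta)=m(m-n)^\omega$ on the left reduces to checking only finitely many positions until the first disagreement. The two forbidden families~\eqref{zakazane:plusn1} and~\eqref{zakazane:plusn2} should come out exactly as the local patterns at which a suffix first fails one of these two inequalities. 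So the first step is to note that, because $d_{-\beta}(l_\beta)$ and $d_{-\beta}^*(r_\beta)$ agree with their own shifts by two positions (both are of the form $w(m-n)^\omega$ with $|w|$ having a fixed parity), the admissibility of an infinite word is a \emph{local} condition on factors, and hence reduces to a finite list of forbidden words.

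Next I would carry out the case analysis $m>n$ versus $m=n$, since the structure of $d_{-\beta}(l_\beta)$ differs: for $m>n$ it is $m(m-n)^\omega$ with $m-n\geq 1$, whereas for $m=n$ it is $m\,0^\omega$, and the degenerate role of the digit $0$ (and the possibility of a tail $0m0^\omega$, which corresponds precisely to the excluded value $l_\beta$ in Definition~\ref{de:-betaexpansionreal}) forces the extra clause ``does not end with $0\,m\,0^\omega$'' in~\eqref{zakazane:plusn3}. In the generic case $m>n$: fix a suffix $s = x_ix_{i-1}\cdots$ and compare it first with $d_{-\beta}(l_\beta)=m(m-n)^\omega$ under $\preceq_{\text{alt}}$. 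If $x_i<m$, the comparison at the first position already decides things in the required direction (using that at an even-indexed mismatch the smaller digit wins, at an odd-indexed mismatch the larger digit wins — which is why the parity of the run of $(m-n)$'s matters). If $x_i=m$, one must descend through the maximal run $(m-n)^j$ following it; the first digit $C$ that differs from $m-n$ then triggers a violation of the left inequality when $j$ is even and $C\le m-n-1$, i.e.\ pattern~\eqref{zakazane:plusn1}, and a violation of the right inequality $s\prec_{\text{alt}}0m(m-n)^\omega$ — after shifting so that the $m$ is aligned with the $m$ in $d_{-\beta}^*(r_\beta)$ — when $j$ is odd and $D\ge m-n+1$, i.e.\ pattern~\eqref{zakazane:plusn2}. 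Conversely, if a word avoids both patterns, walking through the same comparison shows every suffix satisfies~\eqref{eq:admissIS}, so the word is admissible. The $m=n$ case is handled the same way, noting that $(m-n)=0$ collapses the two patterns~\eqref{zakazane:plusn1}--\eqref{zakazane:plusn2} into the single pattern $m\,0^{2k+1}D$ with $D\ge 1$ (the other pattern $m\,0^{2k}C$ with $C\le -1$ being vacuous), plus the boundary exclusion of $0m0^\omega$.

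I expect the main obstacle to be bookkeeping the parity and the shift: the right inequality compares a suffix with $0m(m-n)^\omega$, which begins with a spurious $0$, so to reuse the "same" local analysis one must either prepend a $0$ to the suffix or shift the comparison window by one, and in either case the parity of indices (which is what the alternate order is sensitive to) flips. Getting the parities of $k$ consistent between~\eqref{zakazane:plusn1} and~\eqref{zakazane:plusn2} is exactly where this flip shows up — an even run length for the $\le m-n-1$ violation but an odd run length for the $\ge m-n+1$ violation — and that is the only genuinely delicate point; everything else is a routine, if slightly tedious, unwinding of the definitions. A clean way to organize it is to first prove the equivalence for a single suffix comparison (a short lemma on alternate-order comparisons of the form ``$u \preceq_{\text{alt}} w(m-n)^\omega$'' where $u$ is eventually $0^\omega$), and then quantify over all suffixes; the finiteness hypothesis guarantees these suffix comparisons are all decided in finitely many steps, so there is no subtlety about limits.
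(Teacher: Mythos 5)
The paper gives no written proof of this lemma --- it is asserted to follow easily from the admissibility condition~\eqref{eq:admissIS} with the data~\eqref{eq:admissquadr} --- and your plan of unfolding that condition suffix by suffix is exactly the intended derivation; your sketch is essentially correct, including the observation that finiteness of the digit string makes every comparison terminate and that the $m=n$ clause about the tail $0\,m\,0^\omega$ comes from the strict right inequality against $d^*_{-\beta}(r_\beta)$. One bookkeeping point needs repair, and it is precisely at the spot you flag as delicate: you attribute the family~\eqref{zakazane:plusn2} to a violation of the right inequality against $d^*_{-\beta}(r_\beta)=0m(m-n)^\omega$, but that comparison is only nontrivial when the digit \emph{preceding} the $m$ equals $0$ (otherwise the very first position already decides the right inequality favourably), so taken literally it would only forbid $0\,m\,(m-n)^{2k+1}D$ rather than $m\,(m-n)^{2k+1}D$. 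In fact both families~\eqref{zakazane:plusn1} and~\eqref{zakazane:plusn2} fall out of the single left inequality $d_{-\beta}(l_\beta)\preceq_{\hbox{\tiny alt}} x_ix_{i-1}\cdots$ applied to suffixes beginning with the digit $m$: the first disagreement with the tail $(m-n)^\omega$ occurs at an even-indexed position (run of even length, digit too small) for~\eqref{zakazane:plusn1}, and at an odd-indexed position (run of odd length, digit too large) for~\eqref{zakazane:plusn2}. Under the finiteness hypothesis the right inequality then contributes nothing new for $m>n$ (its failure patterns all contain one of the two forbidden words) and contributes only the exclusion of the exact tail $0\,m\,0^\omega$ for $m=n$. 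With that correction both directions of the equivalence go through as you describe.
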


The following lemma is crucial. It shows that although a finite
$(-\beta)$-representation of a real number $x$ over the alphabet
$\{0,1,\dots,m\}$ contains forbidden strings listed in
Lemma~\ref{l:zakazane}, the $(-\beta)$-expansion of the
represented number is also finite, and, in most cases does not
contain smaller powers of $(-\beta)$.

\begin{lem}\label{lem:pisotminusn}
Let $\beta>1$ be root of $x^2-mx-n$, $m\geq n\geq 1$. Then
$$
x:=\sum_{i=0}^{N}a_i(-\beta)^i\in{\rm Fin}(-\beta)
$$
for arbitrary $a_i\in\{0,1,\dots,m\}$. Moreover, $x\in\Zmb$ except
when both $m>n$ and $a_0= m$, in which case $x\in \frac{1}{-\beta}\Zmb$.
\end{lem}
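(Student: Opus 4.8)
\medskip

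The plan is to reduce the claim to a statement about the largest non-negative digit $m$ and then proceed by induction on the number of ``large'' digits that need to be rewritten. First I would observe that the set ${\rm Fin}(-\beta)$ is by definition closed under multiplication by $(-\beta)^{\pm1}$, so it suffices to show that whenever we take an admissible expansion of some $z\in\Zmb$ and increase one digit $z_j$ by one (keeping it in $\{0,\dots,m\}$), the resulting number still lies in ${\rm Fin}(-\beta)$; then any $x$ as in the statement is obtained from $0$ by finitely many such unit increments of digits at non-negative positions, and the claim follows by induction on $\sum a_i$. Equivalently, the key sub-lemma is: if $z\in\Zmb$ with $\langle z\rangle_{-\beta}=z_k\cdots z_0\bullet$ and $0\le z_j<m$, then $z+(-\beta)^j\in{\rm Fin}(-\beta)$, and moreover $z+(-\beta)^j\in\Zmb$ unless the carry propagates all the way down to position $0$ and produces there the digit $m$ in the $m>n$ case.

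\medskip

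The heart of the argument is the carry-propagation analysis, which I would organize around the rewriting rules coming from $\beta^2=m\beta+n$, i.e. $(-\beta)^{i+2}=-m(-\beta)^{i+1}+n(-\beta)^i$, equivalently in digit form $m\,(m{-}n)\,0 \leftrightarrow 0\,0\,n$ read at positions $i+2,i+1,i$, together with the admissibility description in Lemma~\ref{l:zakazane}. Concretely, after adding $(-\beta)^j$ the digit at position $j$ becomes at most $m$, so the only thing that can go wrong is the appearance of a forbidden factor from~\eqref{zakazane:plusn1}--\eqref{zakazane:plusn2} (or~\eqref{zakazane:plusn3} when $m=n$). One then pushes the offending $m$ downward using the identity above: replacing a local pattern $m\,(m{-}n)^{\ell}\,c\,\cdots$ by $0\,(m{-}n)^{\ell-1}\cdots$ with adjusted lower digits. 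I would handle the two parities of $\ell$ separately, exactly mirroring the two forbidden families, and check that each rewriting step either terminates (the new digit string is admissible, possibly after absorbing a $+1$ or $-1$ into a neighbouring digit) or strictly decreases the position of the topmost ``bad'' digit, so the process must stop after at most $j+1$ steps. Since every intermediate string has only finitely many non-zero digits and digits bounded by $m$, the limit expansion is finite.

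\medskip

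For the ``moreover'' part, I would track where the carry can come to rest. As long as the rewriting happens at positions $\ge 1$, the outcome is a finite $(-\beta)$-representation with vanishing fractional part, hence an element of $\Zmb$ by Lemma~\ref{l:zakazane}. The only escape from the non-negative positions is when the chain of rewrites reaches position $0$; there the identity $m\,(m{-}n)\bullet \leftrightarrow 0\,0\bullet n$ forces a digit $n$ at position $-1$, i.e. the value changes by a term in $\frac{1}{-\beta}\Zmb$. When $m=n$ this digit is $n=m$ placed after $0\,0$, which by Lemma~\ref{l:zakazane} (the clause forbidding the ending $0\,m\,0^\omega$, read together with the rule $m\,0\bullet m \to \cdots$) can be absorbed and the result is again in $\Zmb$; when $m>n$ it cannot be absorbed and we genuinely land in $\frac{1}{-\beta}\Zmb$, and one exhibits $a_0=m$, all other $a_i=0$, giving $\langle m\rangle_{-\beta}$ with a nonzero digit $n$ at position $-1$, as the witnessing extremal case.

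\medskip

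I expect the main obstacle to be the bookkeeping in the carry-propagation step: one must verify that after pushing an $m$ down by one position and correcting the two neighbouring digits, no \emph{new} forbidden factor of the \emph{other} parity type is created, and that the digits stay within $\{0,\dots,m\}$ (or are themselves immediately fixable). This is where the precise form of the forbidden strings in Lemma~\ref{l:zakazane}, and the distinction $m>n$ versus $m=n$, is essential, and it is the part of the proof that genuinely uses the quadratic Pisot hypothesis rather than just $\beta>1$.
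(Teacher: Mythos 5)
Your core idea---eliminate forbidden factors by adding suitable representations of zero and argue termination by tracking where the offending configuration sits---is in the same spirit as the paper's proof. But the reduction you wrap around it does not close, and this is a genuine gap. The paper works directly with the given string $a_N\cdots a_0\bullet$: it locates the \emph{left-most} forbidden factor from Lemma~\ref{l:zakazane} and removes it by adding $1\,m\,\ol{n}\,\bullet=\ol{1}\,\ol{m}\,n\,\bullet=0$ at a suitable position, checking in each case that all digits stay in $\{0,\dots,m\}$, that the left-most forbidden factor of the new string begins strictly further right, and that no new nonzero digits are created to its right. You instead induct on $\sum a_i$, adding one unit $(-\beta)^j$ at a time to the \emph{canonical expansion} of the partial sum; your key sub-lemma is stated only for incrementing a digit $z_j<m$. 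Nothing guarantees that the partial sums arising in your induction have digit $<m$ at the position you next want to increment: carries in the canonical expansion of $\sum_i a_i'(-\beta)^i$ can place the digit $m$ at position $j$ while $a_j'<a_j$. If that happens, the increment produces the out-of-alphabet digit $m+1$, and eliminating $m+1$ is exactly the nontrivial content of Theorem~\ref{t:uzavrenost}, which needs extra zero-representations with arbitrarily long carry chains (the string $1\,(m+1)\,(m-n+1)\cdots(m-n+1)\,(m-n)\,\ol{n}$) and which the paper proves \emph{using} the present lemma. So your argument either has a missing case or inverts the logical order in a way that risks circularity.

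Three smaller problems. Your basic identity is wrong as written: from $\beta^2=m\beta+n$ one gets $(-\beta)^{i+2}+m(-\beta)^{i+1}-n(-\beta)^i=0$, i.e.\ the digit string $1\,m\,\ol{n}$ represents zero, not ``$m\,(m-n)\,0\leftrightarrow 0\,0\,n$''. The carry does not only ``push the offending $m$ downward'': when the digit just above the offending $m$ is $0$, the correct move is $A\,0\,m\,(m-n)^{\ell}\cdots\to(A+1)\,m\,(m-n)^{\ell+1}\cdots$, so the $m$ moves \emph{up}, and the right termination invariant is that the left-most forbidden factor moves right, not that a ``topmost bad digit'' descends. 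Finally, your claimed extremal witness is false: for $m>n$ one computes $\langle m\rangle_{-\beta}=1\,m\,(m-n)\,\bullet$, so $m\in\Zmb$; the exceptional behaviour actually arises from configurations $\cdots B\,m\,\bullet$ with $B\neq0$, where the rewrite deposits the digit $n$ at position $-1$. This last slip does not falsify the statement (the exceptional conclusion $x\in\frac{1}{-\beta}\Zmb$ is weaker than $x\in\Zmb$), but the gap in the reduction does need to be repaired.
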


\begin{proof}
Consider the $(-\beta)$-representation $a_Na_{N-1}\cdots
a_0\bullet$ of $x$. If it is not the $(-\beta)$-expansion of $x$,
then $a_Na_{N-1}\cdots
a_0 0^\omega$ contains one of forbidden strings listed in Lemma~\ref{l:zakazane}. We shall rewrite the
left-most forbidden string in  $a_Na_{N-1}\cdots a_00^\omega$ by
adding a suitable $(-\beta)$-representation of $0$. The new
$(-\beta)$-representation of $x$ is `better' than
$a_Na_{N-1}\cdots a_00^\omega$ in the way that the left-most
forbidden string starts at a lower power of $(-\beta)$. Such rewriting does
not add non-zero digits to the right, (unless we deal with the last occurring forbidden string).
Therefore, by repeating such rewriting rules,
we finish in finitely many steps with a $(-\beta)$-representation which does not contain
any forbidden strings, i.e.\ it is the $(-\beta)$-expansion of $x$.

Since $\beta$ is a root of $x^2-mx-n$, we have
\begin{equation}\label{eq:reprenuly}
1\ m\ \ol{n}\ \bullet=\ol{1}\ \ol{m}\ n\ \bullet=0\,.
\end{equation}
(Here for a digit $d$ we write $\ol{d}$ instead of $-d$.)

We distinguish several cases, according to the type of the left-most forbidden string (cf.\ Lemma~\ref{l:zakazane}).

\noindent{\bf Case 1.}
Consider first that $m>n$ and take the forbidden string (\ref{zakazane:plusn1}), together with two digits $A,B$ in the $(-\beta)$-representation of $x$
at higher powers of $(-\beta)$,
\begin{equation}\label{eq:zak1}
\cdots\ A\ B\ m\ (m-n)^{2k}\ C\cdots \qquad k\in\N_0,\ C\leq m-n-1\,.
\end{equation}
The way to rewrite the forbidden string depends on the digits $A, B$.

\noindent{\bf Subcase 1.1.}
Let $B=0$, and consequently $A\in\{0,1,\dots,m-1\}$. (Otherwise $A0m$ is also forbidden, which contradicts the fact
that we take the left-most forbidden string.) We rewrite
$$
\begin{array}{@{\cdots\quad}ccccc@{\quad\cdots}}
  A & 0 & m & (m-n)^{2k} & C \\
  A+1 & m & (m-n) & (m-n)^{2k} & C
\end{array}
$$
It is easy to verify that now no forbidden string occurs left from the digit $C$, which was our aim.

\noindent{\bf Subcase 1.2.}
Let in~\eqref{eq:zak1} be $B\neq 0$ and $k\geq 1$. Then
$$
\begin{array}{@{\cdots\quad}cccccc@{\quad\cdots}}
A&B & m & (m-n) &(m-n)^{2k-1} & C\\
A& B-1 & 0 & m & (m-n)^{2k-1} & C
\end{array}
$$
Again, the latter may contain a forbidden string only starting from the digit $C$.

\noindent{\bf Subcase 1.3.}
Let $B\neq 0$ and  $k=0$. We write
$$
\begin{array}{@{\cdots\quad}cccc@{\quad\cdots}}
A&B & m & C\\
A& B-1 & 0 & C+n
\end{array}
$$
where the latter has no forbidden strings up to the digit $C+n$.

\noindent{\bf Case 2.}
Take the forbidden string (\ref{zakazane:plusn2}) which occurs for both $m>n$ and $m=n$,
\begin{equation}\label{eq:zak2}
\cdots\ A\ B\ m\ (m-n)^{2k+1}\ D\cdots \qquad k\in\N_0,\ D\geq m-n+1\,.
\end{equation}
The rewriting is analogous to subcases 1.1. and 1.2., subcase 1.3. now has no analogue.

\noindent{\bf Subcase 2.1.}
Let $B=0$, and consequently $A\in\{0,1,\dots,m-1\}$. We rewrite
$$
\begin{array}{@{\cdots\quad}ccccc@{\quad\cdots}}
  A & 0 & m & (m-n)^{2k+1} & D \\
  A+1 & m & (m-n) & (m-n)^{2k+1} & D
\end{array}
$$
where the latter has no forbidden strings up to the digit $D$.

\noindent{\bf Subcase 2.2.}
Let in~\eqref{eq:zak2} be $B\neq 0$. Then
$$
\begin{array}{@{\cdots\quad}cccccc@{\quad\cdots}}
A&B & m & (m-n) &(m-n)^{2k} & D\\
A& B-1 & 0 & m & (m-n)^{2k} & D
\end{array}
$$
where the latter has no forbidden strings up to the digit $D$.

\noindent{\bf Case 3.} Consider $m=n$. According to Lemma~\ref{l:zakazane} it remains to solve the
case that the only forbidden string in the $(-\beta)$-representation of $x$ is $0m$ at the end. Necessarily,
the  $(-\beta)$-representation ends with $A0m$, where $A\leq m-1$. We rewite
$$
\begin{array}{@{\cdots\quad}ccc}
A& 0 & m \\
A+1& m & 0
\end{array}
$$

By that, we have shown that $x\in\Finmb$. In order to show
$x\in\Zmb$, note that in all cases except subcase 1.3, the
rewriting of the forbidden string did not influence the digits
starting from $C$ (resp. D) to the right. Thus, if the original
$(-\beta)$-representation of $x$ had vanishing digits at negative
powers of $(-\beta)$, then the same is valid for the rewritten
$(-\beta)$-representation of $x$. The only case where new non-zero
digits at negative powers of $(-\beta)$ may arise, is 1.3 for
$m>n$, and that only if $x=a_Na_{N-1}\cdots a_0\bullet = \cdots
ABm\bullet$, i.e. $a_0=m$.
\end{proof}

\begin{thm}\label{t:uzavrenost}
Let $\beta>1$ be root of $x^2-mx-n$, $m\geq n\geq 1$. Then
$\Finmb$ is closed under addition, i.e. $x+y\in\Finmb$ for
$x,y\in\Finmb$.
\end{thm}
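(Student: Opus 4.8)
The plan is to reduce, via scaling, to adding two $(-\beta)$-integers, and then to normalise the resulting digit string --- which a priori has digits in $\{0,1,\dots,2m\}$ --- into the genuine $(-\beta)$-expansion by finitely many applications of the zero-relation \eqref{eq:reprenuly}. First, since $x,y\in\Finmb$ and $(-\beta)^k\Zmb\subseteq\Zmb$ for every $k\geq 0$, one can pick some $l\geq 0$ with $(-\beta)^l x,(-\beta)^l y\in\Zmb$; writing their $(-\beta)$-expansions as $a_N\cdots a_0\bullet$ and $b_N\cdots b_0\bullet$ (padding by zeros so the lengths agree) gives
\[
(-\beta)^l(x+y)=\sum_{i=0}^N(a_i+b_i)(-\beta)^i,\qquad a_i+b_i\in\{0,1,\dots,2m\}.
\]
As $\Finmb$ is invariant under multiplication by powers of $-\beta$, it suffices to prove that $\sum_{i=0}^N c_i(-\beta)^i\in\Finmb$ for arbitrary digits $c_i\in\{0,1,\dots,2m\}$ (in fact the argument should go through for arbitrary non-negative integers $c_i$).

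The second, and main, step is to establish this as a strengthening of Lemma~\ref{lem:pisotminusn}: that lemma already handles the alphabet $\{0,\dots,m\}$, so what remains is to remove the ``over-full'' digits $\geq m+1$. The tool is $1\,m\,\overline n\,\bullet=\overline1\,\overline m\,n\,\bullet=0$ from \eqref{eq:reprenuly} together with all its shifts: adding $1\,m\,\overline n$ at positions $(i+2,i+1,i)$ trades $n$ units at power $i$ for $m$ units at power $i+1$ plus one unit at power $i+2$, while reading it in the opposite direction lets one lower a digit $\geq m$ at power $i+1$ together with a digit $\geq 1$ at power $i+2$ and create $n$ at power $i$. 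By repeatedly acting on the highest over-full digit --- pushing the excess downward with the second form when the neighbouring digits allow it, and otherwise first using the first form to ``prime'' those neighbours --- one brings all digits into $\{0,\dots,m\}$ while keeping the representation finite (no digits at arbitrarily small powers); then Lemma~\ref{lem:pisotminusn} finishes and, with the scaling reduction, proves the theorem.

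The obstacle I expect to fight is termination of this normalisation. The lattice of relations $\sum e_i(-\beta)^i=0$ with $e_i\in\Z$ is infinitely generated, and naive carrying both cascades to ever higher powers and can cycle (already $n\,\bullet=1\,m\,0\,\bullet$ are two finite representations of the same number), so the rewriting rules and the order of their application must be arranged so that some well-founded quantity --- for instance the position and value of the highest over-full digit, refined by the position of the left-most forbidden factor in the sense of Lemma~\ref{l:zakazane}, or a bound coming from the Galois conjugate of $\beta$ on the ``size'' of the representation --- strictly decreases. Everything else is routine. Incidentally, this machinery also explains why subtraction behaves differently: $0,1\in\Finmb$ but $0-1=-1$ has the purely eventually periodic expansion $1m\bullet(m-n+1)^\omega\notin\Finmb$, since the zero-relations let a positive digit excess be absorbed into higher powers but offer no finite way to cancel a negative ``debt''.
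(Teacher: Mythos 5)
Your overall architecture (produce a finite $(-\beta)$-representation over an enlarged alphabet, then normalise it using shifts of the zero-relation \eqref{eq:reprenuly} until Lemma~\ref{lem:pisotminusn} applies) is the right general idea, but the step you defer --- termination of the normalisation from $\{0,\dots,2m\}$ down to $\{0,\dots,m\}$ --- is not a routine detail to be arranged later; it is the entire mathematical content of the theorem, and you have not supplied it. As you yourself observe, the naive carries both cascade upward and can cycle, and no obvious quantity (digit sum, number of over-full positions, position of the highest over-full digit) is monotone under the rules you describe: ``priming'' a zero neighbour at position $j+1$ by adding $1\,m\,\ol{n}$ at positions $(j+2,j+1,j)$ \emph{decreases} the digit at $j$ by $n$, possibly making it negative, and increases the digit at $j+2$, so the excess can migrate indefinitely. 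Gesturing at ``the left-most forbidden factor'' or ``a bound from the Galois conjugate'' does not close this: the termination argument inside Lemma~\ref{lem:pisotminusn} works only because its rewriting rules are tailored to the specific forbidden strings of Lemma~\ref{l:zakazane} and provably never create a new forbidden string to the left; an analogous invariant for the alphabet $\{0,\dots,2m\}$ would have to be designed and verified from scratch, and that is a different (and harder) case analysis.

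The paper sidesteps this entirely by a reduction you are missing: addition of $y$ is decomposed into repeated addition of a single digit $1$, so it suffices to show $x\in\Finmb\Rightarrow x+1\in\Finmb$. Then at most one position of the (already admissible) expansion of $x$ can overflow, and only to the value $m+1$; since the expansion of $x$ is admissible, Lemma~\ref{l:zakazane} pins down the possible local contexts of that position to three cases, each of which is repaired by a \emph{single} addition of a representation of zero, yielding a representation over $\{0,\dots,m\}$ to which Lemma~\ref{lem:pisotminusn} applies. No new termination argument is needed beyond the one already inside Lemma~\ref{lem:pisotminusn}. If you want to rescue your version, you should either adopt this one-unit-at-a-time reduction or actually exhibit a well-founded measure for your $\{0,\dots,2m\}$ rewriting system; as written, the proof has a genuine gap at its central step. (Your closing remark about subtraction is correct in spirit and consistent with the paper's example $\langle -1\rangle_{-\beta}=1m\bullet(m-n+1)^\omega$, but it is a heuristic, not an argument.)
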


\pfz
Since addition of $y\in\Finmb$ to an $x\in\Finmb$ can be
decomposed into addition to $x$ of several digits $1$ at different
positions, it is obvious that it suffices to verify the following
implication,
$$
x\in\Finmb\quad\Rightarrow\quad x+1\in\Finmb\,.
$$
In fact, in context of Lemma~\ref{lem:pisotminusn}, we only need
to obtain a finite $(-\beta)$-representation of $x+1$ over the
alphabet $\{0,1,\dots,m\}$. For that we only consider the case
that the $(-\beta)$-expansion of $x$ has the digit $m$ at position
$(-\beta)^0$, which leads to the digit $m+1$ in the
$(-\beta)$-representation of $x+1$. Again, for elimination of the
forbidden digit $m+1$ we use addition of a suitable representation
of $0$.

\noindent{\bf Case 1.} If the $(-\beta)$-expansion of $x$ is of
the form $\cdots A 0 m \bullet \cdots$, then necessarily
$A\in\{0,1,\dots m-1\}$, and we use
$$
\begin{array}{c@{\ \ }c@{\ \ }c@{\ }c@{\ }c@{\ }c@{\ }c@{\ }c}
x+1&=&\cdots& A & 0 &(m+1)&\bullet&\cdots\\
0  &=&      & 1 & m & \ol{n}&\bullet& \\
\hline \rule{0pt}{12pt}
 x+1&=&\cdots&A+1& m & (m-n+1)&\bullet&\cdots
\end{array}
$$
The latter is over the alphabet $\{0,1,\dots,m\}$.

\noindent{\bf Case 2.} Consider the $(-\beta)$-expansion of $x$ of
the form $\cdots B m \bullet (m-n)\cdots$, with
$B\in\{1,2,\dots,m\}$. Then
$$
\begin{array}{c@{\ \ }c@{\ \ }c@{\ }c@{\ }c@{\ }c@{\ }c@{\ }c}
x+1&=&\cdots& B &(m+1)&\bullet& (m-n)&\cdots\\
0  &=&  & \ol{1} &\ol{m} &\bullet& n& \\
\hline\rule{0pt}{12pt}
 x+1&=&\cdots& B-1& 1 &\bullet& m&\cdots
\end{array}
$$
The latter is over the alphabet $\{0,1,\dots,m\}$.

\noindent{\bf Case 3.} According to Lemma~\ref{l:zakazane}, it
remains to consider the $(-\beta)$-expansion of $x$ of the form
$\cdots B m \bullet X_1\cdots X_{k} Y\cdots$, where $B\in\{1,2,\dots,m\}$, $k\geq 1$,
$X_i\in\{m\!-\!n\!+\!1,\dots ,m\}$ and $Y\in\{0,1,\dots ,m-n\}$.
Here, we shall use a more complicated $(-\beta)$-representation of 0, which we obtain
by repeated use of~\eqref{eq:reprenuly}, namely
$$
0=1\ (m+1)\ (m-n+1)\ \cdots\ (m-n+1)\ (m-n)\ \ol{n}\,.
$$
Then
$$
\begin{array}{c@{\ }c@{\ }c@{\ }c@{\ }c@{\ \,}c@{\ \,}c@{\ }c@{\ }c@{\ \,}c@{\ }c@{\,}c}
x+1&=& \cdots & B & (m+1)&\bullet&X_1 &\ldots & X_{k-1} & X_k & Y&\cdots\\
0  &=&        &\ol{1} & \ol{m+1}&\bullet & \ol {m\!-\!n\!+\!1}&\ldots&\ol {m\!-\!n\!+\!1}&\ol{m-n}&n&
\\
\hline
\rule{0pt}{12pt} x+1&=&\cdots & B-1 & 0&\bullet & \tilde{X}_1 &\ldots & \tilde{X}_{k-1}& \tilde{X}_k& Y\!+\!n&\cdots\\
\end{array}
$$
where $\tilde{X}_i=X_i\!-\!(m\!-\!n\!+\!1)$ for $i=1,\dots,k-1$ and $\tilde{X}_k=X_k\!-\!(m\!-\!n)$.
The latter representation of $x+1$ is over the alphabet $\{0,1,\dots,m\}$ and therefore by Lemma~\ref{lem:pisotminusn},
$x+1\in\Finmb$.
\pfk


\section{Bound on length of fractional part}

In this section we focus on the quantities $L_\oplus(-\beta)$, $L_\otimes(-\beta)$. They have been studied for quadratic Pisot units already in~\cite{MaPeVa}.
However, the method used there does not allow us to obtain exact value for $\beta$ equal to the golden ratio $\tau=\frac12(1+\sqrt5)$. We therefore determine the value $L_\oplus(-\tau)$, $L_\otimes(-\tau)$ here, see Theorem~\ref{thm:tau}. The proof uses a strong relation of $\tau$- and $(-\tau)$-representations. In order to distinguish the two in notation, we shall write $\bullet_\tau$, $\bullet_{-\tau}$ for the fractional point separating digits at non-negative and negative powers of the base in the
two numeration systems.

We first show a lemma putting into relation $\tau$- and $(-\tau)$-representations. Note that statement in item 1.\ of Lemma~\ref{celacisla}
is known. We nevertheless include its proof in order to keep the paper self-contained.

\begin{lem}\label{celacisla}
Let $a_i\in\{0,1\}$, for $i=0,\dots,N$. Then
\begin{enumerate}
\item $x=\sum_{i=0}^N a_i\tau^i\in\Z_\tau\,,$
\item $y=\sum_{i=0}^N a_i(-\tau)^i\in\Z_{-\tau}\,.$
\item If $a_i \cdot a_{i-1}=0$ for all $i\in\{1,\dots,N\}$,  then $y\in(-\tau)\Z_{-\tau}\,.$
\end{enumerate}
\end{lem}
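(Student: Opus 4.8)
The plan is to prove the three items in order, exploiting the well-known relation $\tau^2 = \tau+1$, equivalently $(-\tau)^2 = -(-\tau)+1$, i.e.\ $1 = (-\tau)^2 + (-\tau)$, which in digit-string form reads $\langle\,\cdot\,\rangle$: the string $11\bullet_\tau$ represents $\tau^2$ in base $\tau$, and analogously $011$ can be rewritten. For item~1, given $x=\sum_{i=0}^N a_i\tau^i$ with $a_i\in\{0,1\}$, I would argue that repeatedly applying the rewriting rule $0\,1\,1 \mapsto 1\,0\,0$ (valid since $\tau^{i}+\tau^{i+1}=\tau^{i+2}$) to the digit string $a_N\cdots a_0\bullet_\tau$, starting from the left-most occurrence of $11$, terminates after finitely many steps in a string with no two consecutive $1$'s and no tail $(10)^\omega$ (the tail condition is automatic since the string is finite). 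By the Parry admissibility condition~\eqref{eq:Parry} with $d_\tau^*(1)=(10)^\omega$, this is exactly the $\tau$-expansion of $x$, and since no negative powers were introduced, $x\in\Z_\tau$. The only subtlety is to see that the rewriting does not propagate to negative positions and does terminate; this is the standard carry argument and I would only sketch it.

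For item~2, I would proceed analogously but using the admissibility condition for the negative base. By Lemma~\ref{l:zakazane} applied with $m=n=1$ (so $\beta=\tau$ is the root of $x^2-x-1$), the forbidden strings for finite $(-\tau)$-expansions are $1\,0^{2k+1}\,D$ with $D\geq 1$, together with the tail $0\,1\,0^\omega$. Alternatively, and more cleanly, I would simply invoke Lemma~\ref{lem:pisotminusn} directly: it states that $\sum_{i=0}^N a_i(-\beta)^i\in\Finmb$ for \emph{arbitrary} digits $a_i\in\{0,\dots,m\}$, and that the result lies in $\Zmb$ except when $m>n$ and $a_0=m$. Since here $m=n=1$, the exceptional case never occurs, so $y=\sum_{i=0}^N a_i(-\tau)^i\in\Z_{-\tau}$ unconditionally. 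This makes item~2 essentially immediate.

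For item~3, I need the extra conclusion $y\in(-\tau)\Z_{-\tau}$ under the hypothesis that the digit string $a_N\cdots a_0$ contains no two consecutive $1$'s. The idea is that this hypothesis forces $a_0 a_1 \in\{00,01,10\}$, i.e.\ the pattern $11$ never appears, so no rewriting is needed at all near the units position in a way that would produce a nonzero digit at $(-\tau)^{-1}$; moreover the constraint should let me show that $(-\tau)^{-1} y = \sum_{i=0}^N a_i(-\tau)^{i-1} = \sum_{j=-1}^{N-1} a_{j+1}(-\tau)^{j}$ still has a finite $(-\tau)$-expansion with vanishing digits below $(-\tau)^0$ — equivalently that $y$, written in base $-\tau$, actually ends in a $0$ at the units position after reduction, or that the expansion of $y$ has length exponent $\leq N$ with a structural divisibility. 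Concretely I would examine $y/(-\tau)$: it equals $-a_0 + \sum_{i=1}^N a_i(-\tau)^{i-1}$, and I would show that the sparseness condition $a_i a_{i-1}=0$ guarantees that the reduction of this $(-\tau)$-representation (via Lemma~\ref{lem:pisotminusn} again, noting the digit $-a_0\in\{-1,0\}$ needs separate handling, or by first absorbing it) stays a $(-\tau)$-integer. The cleanest route is probably: show by the same left-most-rewriting argument as in item~1 that the \emph{sparse} string $a_N\cdots a_0\bullet_{-\tau}$ is \emph{already} the $(-\tau)$-expansion of $y$ (sparseness kills the forbidden pattern $10^{2k+1}D$ only partially, so I must also check the tail condition $0\,1\,0^\omega$), and then observe that $y$ being a $(-\tau)$-integer with this particular admissible sparse form means $(-\tau)\mid y$ in the appropriate sense, giving $y\in(-\tau)\Z_{-\tau}$.

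The main obstacle I anticipate is item~3: unlike items~1 and~2, where I can either quote Lemma~\ref{lem:pisotminusn} or run a standard carry argument, here I must extract a \emph{divisibility} statement about $y$ from the combinatorial sparseness of its digits, and the reduction to the true $(-\tau)$-expansion may reshuffle digits in a way that is not obviously compatible with the factor $(-\tau)$. I expect the resolution to hinge on a careful case analysis of the last one or two digits $a_0, a_1$ under the constraint $a_0 a_1 = 0$, showing that in each case the $(-\tau)$-expansion of $y$ genuinely has a trailing $0\bullet_{-\tau}$ (equivalently, the constant term of $y$ in the $\Q(\tau)$ representation is divisible by $\tau$ — or rather, that $y\in\tau\mathbb Z[\tau]\cap\Z_{-\tau}=(-\tau)\Z_{-\tau}$), which I would verify by a short algebraic computation in $\Z[\tau]$ combined with the admissibility check.
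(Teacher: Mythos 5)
Your treatment of items 1 and 2 is correct and coincides with the paper's: item 1 by the carry rule $011\mapsto100$ with termination via decreasing digit sum, and item 2 by direct appeal to Lemma~\ref{lem:pisotminusn}, where $m=n=1$ rules out the exceptional case. The problem is item 3, where your ``cleanest route'' does not work. First, the sparse string $a_N\cdots a_0\bullet_{-\tau}$ is in general \emph{not} already the $(-\tau)$-expansion of $y$: sparseness does not exclude the forbidden patterns of Lemma~\ref{l:zakazane} for $m=n=1$ (the string $101$ contains $10^11$, and even the single digit $1$ ends in the forbidden tail $0\,1\,0^\omega$; indeed $\langle 1\rangle_{-\tau}=110\bullet_{-\tau}$ and $\langle \tau^2+1\rangle_{-\tau}=11110\bullet_{-\tau}$). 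Second, the inference you want to draw is backwards: $(-\tau)\Z_{-\tau}$ is exactly the set of $(-\tau)$-integers whose expansion has digit $0$ at position $0$, so if the sparse string with $a_0=1$ \emph{were} the expansion, that would prove $y\notin(-\tau)\Z_{-\tau}$. Third, the algebraic shortcut $(-\tau)\Z_{-\tau}=\tau\Z[\tau]\cap\Z_{-\tau}$ is false, since $\tau$ is a unit in $\Z[\tau]$ ($\tau(\tau-1)=1$), so $\tau\Z[\tau]=\Z[\tau]$ and the right-hand side is all of $\Z_{-\tau}$; no ring-theoretic divisibility argument can detect membership in $(-\tau)\Z_{-\tau}$.

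The missing idea is a digit-wise addition of a representation of zero. For $a_0=0$ item 3 is immediate from item 2 applied to $y/(-\tau)$. For $a_0=1$, the hypothesis $a_ia_{i-1}=0$ forces the string to end in $0\,0\,(1\,0)^k\,1$ for some $k\geq0$, and one checks that $1\,1\,(0\,1)^k\,\ol{1}\,\bullet_{-\tau}$ is a $(-\tau)$-representation of $0$ (for $k=0$ this is $\tau^2-\tau-1=0$). Adding it to $\cdots\,0\,0\,(1\,0)^k\,1\,\bullet_{-\tau}$ yields $\cdots\,1\,1\,(1\,1)^k\,0\,\bullet_{-\tau}$, a representation over $\{0,1\}$ with units digit $0$ and no fractional digits; item 2 applied to $y/(-\tau)$ then gives $y\in(-\tau)\Z_{-\tau}$. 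Your alternative sketch (dividing by $-\tau$ and ``absorbing'' the resulting digit $-a_0$) is pointing in the right direction, but without this explicit zero-representation the negative digit is never eliminated, so as written the proof of item 3 is incomplete.
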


\begin{proof}
Item 1.\ is shown as follows: If $a_n\cdots a_0\bullet_\tau$ is not a $\tau$-expansion,
then it contains two consecutive digits 1. If $i$ is the greatest index $\leq N$ such that
$a_i=a_{i-1}=1$, then $a_{i+1}=0$ and we can rewrite
$$
a_{i+1}\tau^{i+1}+a_i\tau^i+a_{i-1}\tau^{i-1}=\tau^{i+1}\,.
$$
We obtain a new $\tau$-representation of $x$, where the sequence
$100$ has been replaced by $011$. The digit sum has strictly
decreased. It is obvious that we can continue to obtain, in
finitely many steps, the $\tau$-expansion of $x$, i.e. a
$\tau$-representation of $x$ not containing the forbidden string
11.

The proof of item 2.\ follows directly from
Lemma~\ref{lem:pisotminusn}. In item 3., if $a_0=0$, the proof
follows from item 2. So consider $a_0=1$. Since $a_i \cdot a_{i-1}=0$,
the string $a_N\cdots a_1a_0$ does not contain two consecutive
digits 1, i.e.\ there is a $k\geq 0$ such that $a_N\cdots
a_1a_0=\cdots 0 0 (10)^k1$. One can easily verify that
$11(01)^k\ol{1}\bullet_{-\tau}$ is a $(-\tau)$-representation of
0. Thus for the $(-\tau)$-representation of $y$ we can rewrite
$$
\begin{array}{ccccccccc}
y & = & \cdots & 0 & 0 & (1 & 0)^k & 1 & \bullet_{-\tau}\\
0 & = &        & 1 & 1 & (0 & 1)^k & \ol{1} & \bullet_{-\tau}\\
\hline
y & = & \cdots & 1 & 1 & (1 & 1)^k & 0 & \bullet_{-\tau}
\end{array}
$$
It suffices now to apply item 2.
\end{proof}

In the proof of the following theorem we shall use the
automorphism on the algebraic field $\Q(\tau)$. Recall that $\tau$
is an algebraic number with conjugate $\tau'=-\frac1\tau$. ($\tau$
and $\tau'$ are roots of the polynomial $x^2-x-1$.) Since the
algebraic fields $\Q(\tau)$ and $\Q(\tau')$ coincide, the
isomorphism $\sigma$ defined by~\eqref{eq:isomorphism} is now an
involutive automorphism of the field $\Q(\tau)$, i.e.
$\sigma^2(z)=z$ for all $z\in\Q(\tau)$. We also use the fact known
from~\cite{FruSo} that the set ${\rm Fin}(\tau)$ of numbers with
finite $\tau$-expansions is a ring.

\begin{thm}\label{thm:tau}
$L_\oplus(-\tau)=2=L_\otimes(-\tau)$.
\end{thm}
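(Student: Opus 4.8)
The plan is to prove the two equalities $L_\oplus(-\tau)=2$ and $L_\otimes(-\tau)=2$ by establishing, on one hand, the upper bound $L_\oplus(-\tau)\leq 2$ and $L_\otimes(-\tau)\leq 2$ for \emph{all} pairs of $(-\tau)$-integers, and on the other hand exhibiting explicit pairs $x,y\in\Z_{-\tau}$ whose sum (resp.\ product) is finite but genuinely needs two fractional digits, so that the bound cannot be improved to $1$. The lower bound is the easy half: for addition, take something like $x=y$ equal to a suitable $(-\tau)$-integer (mimicking the positive-base example $1+1=10\bullet 01$, one looks for $x+x$ whose $(-\tau)$-expansion ends in $\bullet_{-\tau}ab$ with $b\neq 0$); for multiplication, square a small $(-\tau)$-integer such as the analogue of $\tau^2+1$. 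One just computes the $(-\tau)$-expansion using Lemma~\ref{l:zakazane} to confirm it has exactly two nonzero fractional digits and none further.

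The substance is the upper bound. Here I would exploit the Galois conjugation $\sigma$ on $\Q(\tau)$ sending $\tau\mapsto\tau'=-1/\tau$, together with the fact (quoted from~\cite{FruSo}) that ${\rm Fin}(\tau)$ is a ring and the dictionary between $\tau$- and $(-\tau)$-representations provided by Lemma~\ref{celacisla}. The key observation is that $-\tau=\sigma(\tau^2)$: indeed $\sigma(\tau^2)=\tau'^2=1/\tau^2$... so more carefully, one uses that $(-\tau)$ and $\tau^2$ (or $\tau$ and some power) are linked by $\sigma$ up to sign of exponents, which is exactly why the paper announces the coincidence of $(-\tau)$-integers with $(\tau^2)$-integers on the positive half-line. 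Concretely, I expect the argument to run: given $x,y\in\Z_{-\tau}$ with $x+y\in{\rm Fin}(-\tau)$, write $x+y=\sum_{i=-L}^{K}c_i(-\tau)^i$ in its $(-\tau)$-expansion with $c_{-L}\neq 0$; apply $\sigma$ to transport this identity into a statement about $\tau$-expansions (using $\sigma(-\tau)=-\tau'=1/\tau$, so negative powers of $-\tau$ become positive powers of $\tau$ and vice versa), and then use the known value $L_\oplus(\tau)=2$ from~\cite{BuFrGaKr} to bound $L$. One must be careful that applying $\sigma$ to a \emph{finite} $(-\tau)$-representation over $\{0,1\}$ produces a finite $\tau$-representation over $\{0,1\}$, which by Lemma~\ref{celacisla}(1) (applied after reindexing) is controlled; the finiteness of $x+y$ and membership in ${\rm Fin}(\tau)$ closes the loop. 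Multiplication is handled the same way, invoking $L_\otimes(\tau)=2$ and the ring property of ${\rm Fin}(\tau)$.

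An alternative, more self-contained route for the upper bound, which I would fall back on if the conjugation bookkeeping gets delicate, is purely combinatorial: take $x,y\in\Z_{-\tau}$, form $x+y$ digitwise as a $(-\tau)$-representation over $\{0,1,2\}$ with no fractional digits, and then run the rewriting rules from the proof of Lemma~\ref{lem:pisotminusn} (for $m=n=1$, where $\beta=\tau$), tracking carefully how far to the right of $\bullet_{-\tau}$ a carry can propagate. Using the relation $1\,1\,\overline{1}\,\bullet_{-\tau}=\overline 1\,\overline 1\,1\,\bullet_{-\tau}=0$ and the forbidden-string list for $m=n=1$, one checks that a digit $2$ at position $(-\tau)^0$ can be eliminated producing at most two fractional digits, and that no subsequent rewriting pushes a nonzero digit past position $(-\tau)^{-2}$. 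For multiplication one reduces to addition via $x\cdot y=\sum_j x\cdot(\text{digit}_j)(-\tau)^j$ as in the proof of Theorem~\ref{t:uzavrenost}, noting each partial product is shifted but the fractional spread does not accumulate beyond $2$.

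The main obstacle I anticipate is the second half of the upper-bound argument: showing that the carries generated while normalizing $x+y$ (or $xy$) never reach below $(-\tau)^{-2}$. Unlike closure under addition, which only needs \emph{some} finite representation, here one needs a sharp quantitative control, and the alternating-sign nature of the base means carries can cascade in both directions; the temptation to lose one extra position at each of several rewriting steps must be defeated by a careful invariant (e.g.\ that after eliminating the left-most violation at level $0$, the representation below $(-\tau)^{-2}$ is already the canonical all-zero tail and stays so). The conjugation approach sidesteps this by black-boxing $L_\oplus(\tau)=L_\otimes(\tau)=2$, so I would present that as the primary proof, with the combinatorial check relegated to the verification that $\sigma$ respects the relevant finiteness and digit-alphabet constraints.
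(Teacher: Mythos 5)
Your primary strategy --- conjugate by $\sigma$, land in ${\rm Fin}(\tau)$, use its ring structure, and pull back --- is exactly the paper's, and the supporting ingredients ($-\tau'=\tau^{-1}$, Lemma~\ref{celacisla}, the ring property from~\cite{FruSo}) are the right ones. But the step that is supposed to produce the number $2$ is misidentified. Under $\sigma$ the roles of integer and fractional positions are \emph{swapped}: the fractional digits of $x+y$ in base $-\tau$ correspond to the digits of $z=\sigma(x)+\sigma(y)$ at \emph{positive} powers of $\tau$, i.e.\ to the length of the integer part of $z$, not to its fractional part. The quantity $L_\oplus(\tau)=2$ from~\cite{BuFrGaKr} controls the fractional part of a sum of $\tau$-integers, which after conjugation would only say something about the most significant digits of $x+y$ --- irrelevant to $L_\oplus(-\tau)$. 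What actually bounds the integer part of $z$ is a plain magnitude estimate: since $\langle x\rangle_{-\tau}$ and $\langle y\rangle_{-\tau}$ have digits in $\{0,1\}$ at positions $0,\dots,k$, one gets $\sigma(x),\sigma(y)<\sum_{i\leq 0}\tau^i=\tau^2$, hence $z<2\tau^2<\tau^4$ and the $\tau$-expansion of $z$ reaches at most $\tau^3$. Pulling back by $\sigma$ gives a priori \emph{three} fractional digits; the paper then invokes item 3 of Lemma~\ref{celacisla} (a $\tau$-expansion contains no factor $11$, so the reversed string absorbs one more power of $-\tau$ while staying in $\Z_{-\tau}$) to get down to two. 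Neither the magnitude estimate nor this final reduction appears in your proposal, and the tool you cite in their place does not do the job; your argument as written yields closure (which is Theorem~\ref{t:uzavrenost}) but not the sharp bound. The same correction applies to multiplication, where again $z<(\tau^2)^2=\tau^4$.

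Two smaller points. For the lower bounds you only promise that suitable examples exist; explicit ones are needed, e.g.\ $1111\bullet_{-\tau}+1111\bullet_{-\tau}=110000\bullet_{-\tau}11$ and $1111\bullet_{-\tau}\times 1111\bullet_{-\tau}=11100\bullet_{-\tau}11$. Your fallback combinatorial route (digitwise addition over $\{0,1,2\}$ followed by carry propagation) is not what the paper does and, as you yourself anticipate, controlling how far carries cascade past $\bullet_{-\tau}$ in the alternating base is exactly the delicate point; the conjugation argument avoids it, but only once the magnitude bound replaces the appeal to $L_\oplus(\tau)$.
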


\begin{proof}
Consider $x,y\in\Z_{-\tau}$, i.e.
$$
x= \sum_{i=0}^k x_{i}(-\tau)^i\,,\qquad y=\sum_{i=0}^l y_{i}(-\tau)^i
$$
with $(-\tau)$-expansions
$$
\begin{array}{rcr}
\langle x\rangle_{-\tau}&=&x_kx_{k-1}\cdots x_0\bullet_{-\tau}\,,\\
\langle y\rangle_{-\tau}&=&y_ly_{l-1}\cdots y_0\bullet_{-\tau}\,.
\end{array}
$$
Applying the automorphism $\sigma$ to $x,y$, we obtain --- by using $-\tau'=\tau^{-1}$ --- that
$$
\sigma(x)= \sum_{i=0}^k x_{i}(-\tau')^i =
\sum_{i=-k}^0x_{-i}\tau^i\,,\qquad \sigma(y)=\sum_{i=0}^l
y_{i}(-\tau')^i=\sum_{i=-l}^0y_{-i}\tau^i\,.
$$
Digit strings
$x_0\bullet_{\tau}x_{1}\cdots x_k$, $y_0\bullet_{\tau}y_{1}\cdots
y_l$ are $\tau$-representations of numbers $\sigma(x)$,
$\sigma(y)$. Since  $x_i,y_i\in\{0,1\}$, we can use Lemma~\ref{celacisla} to derive that
$\sigma(x)$, $\sigma(y)$ belong to ${\rm Fin}(\tau)$. As ${\rm
Fin}(\tau)$ is closed under addition, we have
$\sigma(x)+\sigma(y)=z$ for some $z\in{\rm Fin}(\tau)$. We have
$$
\sigma(x),\sigma(y) < \sum_{i=-\infty}^0\tau^i = \frac{1}{1-\tau^{-1}} = \tau^2\,,
$$
and therefore $z < 2\tau^2 <\tau^4$. This implies that the $\tau$-expansion of $z$ is
of the form
$$
\langle z\rangle_{\tau}=z_3z_2z_1z_0\bullet_{\tau}z_{-1}\cdots z_{-N}
$$
for some integer $N$, i.e.
$$
z=\sum_{i=-N}^{3}z_i\tau^i\,,\qquad z_i\in\{0,1\}\,,\quad
z_i\cdot z_{i-1}=0\,.
$$
Applying the automorphism $\sigma$ to $z$, we obtain
$$
\sigma(z)=\sum_{i=-N}^{3}z_i{\tau'}^i =
\sum_{i=-3}^{N}z_{-i}(-\tau)^i\,.
$$
The string $z_{-N}\cdots z_{0}\bullet_{-\tau} z_1z_2z_3$ is a $(-\tau)$-representation of
$\sigma(z)$. Since $z_i\in\{0,1\}$ and $z_i\cdot z_{i-1}=0$, by item 3.\ of
Lemma~\ref{celacisla}, we get
$$
\sigma(z)=\sigma\big(\sigma(x)+\sigma(y)\big)=x+y\in(-\tau)^{-2}\Z_{-\tau}\,.
$$
By that, we have shown $L_{\oplus}(-\tau)\leq 2$. The opposite inequality is verified by giving the example
$$
1111\bullet_{-\tau}+1111\bullet_{-\tau}=110000\bullet_{-\tau}11\,.
$$

The procedure for proving $L_\otimes(-\tau)\leq 2$ is analogous to
the case of addition. Here $\sigma(x)\cdot\sigma(y)=z$ where $z <
(\tau^2)^2 =\tau^4$, thus we obtain the same number of fractional
digits. In order to prove $L_\otimes(-\tau)\geq 2$, we take the
example
$$
1111\bullet_{-\tau}\times1111\bullet_{-\tau}=11100\bullet_{-\tau}11\,.
$$
\end{proof}

\section{$(-\tau)$-integers}

In the previous section we have focused on the arithmetical properties of the base $-\tau$. Here we show that
the set of non-negative $(-\tau)$-integers coincides with non-negative $\beta$-integers where $\beta=\tau^2$.
%
%
%
We give the proof by first showing that the distances between  $(-\tau)$-integers take the same values as
the distances between $(\tau^2)$-integers. Then we show that the infinite word coding $(-\tau)$-integers is the fixed point
of the same morphism as for $(\tau^2)$-integers.

First recall the facts about $(\tau^2)$-integers. Realize that
$\beta=\tau^2$ is the greater root of $x^2-3x+1$. The infinite
R\'enyi expansion of $1$ in the base $\tau^2$ is therefore equal
to $d_{\tau^2}^*(1)=21^\omega$. Recall (see~\cite{Thurston}) that the distances between consecutive
$(\tau^2)$-integers take values
$$
\Delta_0=1\quad\text{ and }\quad\Delta_1= \frac1\tau\,.
$$
Writing the distances between the $(\tau^2)$-integers, as they are
ordered, one obtains the bidirectional infinite word
\begin{equation}\label{eq:bidirpositive}
\cdots\Delta_0\Delta_1\Delta_0\Delta_0|\Delta_0\Delta_0\Delta_1\Delta_0\cdots\,,
\end{equation}
where the delimiter $|$ marks the position of 0. Note that the
word is symmetric with respect to 0, since $\Z_{\tau^2}$ is
symmetric with respect to 0 by definition. It is therefore
sufficient to study the one-directional infinite word
$$
u_{\tau^2} = \Delta_0\Delta_0\Delta_1\Delta_0\cdots
$$
coding the distances between non-negative $(\tau^2)$-integers. It
is known~\cite{fabre} that the infinite word $u_{\tau^2}$ is the
fixed point of the following morphism over the alphabet
$\{\Delta_0,\Delta_1\}$,
\begin{equation}\label{eq:subst}
\varphi(\Delta_0)=\Delta_0\Delta_0\Delta_1\,,\qquad\varphi(\Delta_1)=\Delta_0\Delta_1\,.
\end{equation}
Repeated application of the morphism $\varphi$ on the letter
$\Delta_0$ leads to
$$
\Delta_0 \mapsto \Delta_0\Delta_0\Delta_1 \mapsto
\Delta_0\Delta_0\Delta_1\Delta_0\Delta_0\Delta_1\Delta_0\Delta_1
\mapsto \cdots
$$
where every iteration has the previous iteration as its prefix.
Infinite repetition leads to the word
$u=\lim_{n\to\infty}\varphi^n(\Delta_0) $, where the limit is
taken with respect to the product topology. We have $u= u_{\tau^2}$.

In what follows, we show that the distances between consecutive
$(-\tau)$-integers also take values $\Delta_0=1$ and
$\Delta_1=\tau^{-1}$, and their ordering corresponds to an
infinite word which is a fixed point of the
substitution~\eqref{eq:subst}. For that we use
Lemma~\ref{l:zakazane} which characterizes digit strings
admissible as $(-\tau)$-expansions using forbidden strings. For
$m=n=1$, Lemma~\ref{l:zakazane} states that the digit string
$x_Nx_{N-1}\dots x_1 x_0\,\bullet$, $x_i\in\{0,1\}$, is the
$(-\tau)$-expansion of some $(-\tau)$-integer if and only if $x_Nx_{N-1}\dots x_1 x_0 0^\omega$
does not contain the forbidden string $10^{j}1$ for any $j$ odd,
and it does not end in $0\,1\,0^\omega$.

\begin{lem}
Let $x,y$ be consecutive $(-\tau)$-integers, $x<y$. If the $(-\tau)$-expansion
of $x$ is of the form $\langle
x\rangle_{-\tau}=x_Nx_{N-1}\cdots x_1 0\,\bullet$, then
$y-x=\Delta_0=1$. If $\langle x\rangle_{-\tau}=x_Nx_{N-1}\cdots x_1
1\,\bullet$, then $y-x=\Delta_1=\frac{1}{\tau}$.
\end{lem}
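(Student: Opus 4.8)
The plan is to determine the successor $y$ by a two-sided estimate: first exhibit a $(-\tau)$-integer lying at the asserted distance above $x$, then use the Galois conjugation in $\Q(\tau)$ to show that no $(-\tau)$-integer lies strictly between $x$ and it. Write $\langle x\rangle_{-\tau}=x_N\cdots x_1x_0\,\bullet$ with $x_i\in\{0,1\}$. For the upper bound: if $x_0=0$, then $x+1=\sum_{i=0}^N a_i(-\tau)^i$ with $a_0=1$ and $a_i=x_i$ for $i\geq1$, so all digits lie in $\{0,1\}$; since $m=n=1$ the exceptional clause of Lemma~\ref{lem:pisotminusn} cannot occur, whence $x+1\in\Z_{-\tau}$ and $y\leq x+1$. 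If $x_0=1$, then Lemma~\ref{l:zakazane} forces $x_1=1$ (otherwise $\langle x\rangle_{-\tau}$ would end in $0\,1\,0^\omega$), and since $1\cdot(-\tau)+1=1-\tau=-\tfrac1\tau$ we have $x=x''-\tfrac1\tau$ with $x''=\sum_{i=2}^N x_i(-\tau)^i$, which is a finite representation in nonnegative powers over $\{0,1\}$ (with vanishing units digit), hence $x''\in\Z_{-\tau}$ by Lemma~\ref{lem:pisotminusn} and $y\leq x''=x+\tfrac1\tau$.

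For the lower bound I would invoke the involutive automorphism $\sigma$ of $\Q(\tau)$, $\sigma(\tau)=\tau'=-\tfrac1\tau$. If $z\in\Z_{-\tau}$ with $\langle z\rangle_{-\tau}=z_M\cdots z_0\,\bullet$, then $\sigma(z)=\sum_{i=0}^M z_i\tau^{-i}$, so $\sigma(z)\in\big[0,\sum_{i\geq0}\tau^{-i}\big)=[0,\tau^2)$, and in fact $\sigma(z)\in[0,\tau)$ whenever $z_0=0$. Now assume some $z\in\Z_{-\tau}$ satisfies $x<z<x+\Delta$, where $\Delta=1$ if $x_0=0$ and $\Delta=\tfrac1\tau$ if $x_0=1$, and put $w=z-x\in\Z[\tau]\cap(0,\Delta)$; since $\sigma(z),\sigma(x)\in[0,\tau^2)$ we get $\sigma(w)\in(-\tau^2,\tau^2)$. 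Writing $w=a+b\tau$ with $a,b\in\Z$ and using $w-\sigma(w)=b(\tau-\tau')=b\sqrt5$, the bounds on $w$ and on $\sigma(w)$ leave only $b\in\{-1,0,1\}$, and a short inspection of these three cases gives: for $\Delta=\tfrac1\tau$ there is no admissible $w$ at all (the only candidate, $w=2-\tau$ at $b=-1$, has $\sigma(w)=2+\tfrac1\tau=\tau^2\notin(-\tau^2,\tau^2)$), so $y\geq x+\tfrac1\tau$; while for $\Delta=1$ the only admissible $w$ is $w=\tfrac1\tau=\tau-1$, and then $\sigma(z)=\sigma(x)+\sigma(\tfrac1\tau)=\sigma(x)-\tau<0$ because $x_0=0$ forces $\sigma(x)<\tau$ --- contradicting $\sigma(z)\geq0$ --- so $y\geq x+1$. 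Combined with the upper bounds this yields $y-x=\Delta$ in both cases, which is the assertion.

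The digit bookkeeping is precisely what Lemma~\ref{lem:pisotminusn} is designed for, and the lattice-point enumeration runs over only three values of $b$; the one genuinely delicate point --- and the main obstacle --- is that the crude conjugate window $\sigma(\Z_{-\tau})\subseteq[0,\tau^2)$ does \emph{not} by itself exclude the difference $w=\tfrac1\tau$, so one must sharpen it to $\sigma(x)\in[0,\tau)$ using the vanishing last digit in order to close the case $x_0=0$.
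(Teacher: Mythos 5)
Your proof is correct, but it takes a genuinely different route from the paper's. The paper proves the lemma by an explicit digit-by-digit construction: it splits into eight subcases according to how $\langle x\rangle_{-\tau}$ ends (the number of trailing zeros, modulo parity, etc.), writes down $\langle x+\Delta\rangle_{-\tau}$ in each subcase, and then disposes of possible intermediate $(-\tau)$-integers by observing that no admissible digit string lies strictly between $\langle x\rangle_{-\tau}$ and $\langle x+\Delta\rangle_{-\tau}$ in the alternate order (a step it leaves to the reader). You instead get the upper bound $y\leq x+\Delta$ cheaply from Lemma~\ref{lem:pisotminusn} --- in the case $x_0=0$ the string $x_N\cdots x_1 1$ is over $\{0,1\}$ and the exceptional clause is vacuous for $m=n=1$, and in the case $x_0=1$ the forced digit $x_1=1$ lets you absorb $(-\tau)+1=-\tau^{-1}$ --- and you get the lower bound by conjugating into the window $\sigma(\Z_{-\tau})\subseteq[0,\tau^2)$ and enumerating the lattice points $a+b\tau$ in $(0,\Delta)$ with conjugate in $(-\tau^2,\tau^2)$, correctly noting that the borderline candidate $w=\tau^{-1}$ must be killed by the sharpened bound $\sigma(x)\in[0,\tau)$ available when $x_0=0$. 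Your argument trades the paper's combinatorial bookkeeping and the unverified alternate-order comparison for a short algebraic computation; the price is that it leans on $\tau$ being a quadratic unit (so that $\sigma$ confines $\Z_{-\tau}$ to a bounded interval), whereas the paper's method also produces the explicit expansions of successors and is the kind of argument that transfers to non-unit quadratic bases. Both are complete proofs of the statement.
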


\pfz We distinguish two cases according to the last digit of the
$(-\tau)$-expansion of $x$. We provide the $(-\tau)$-expansion
of $x+\Delta$, where $\Delta=1$ in Case 1. and $\Delta=\tau^{-1}$ in
Case 2. It is then easy to check that no digit string
$z_lz_{l-1}\cdots z_1z_0\,\bullet$ lies between $\langle
x\rangle_{-\tau}$ and $\langle x+\Delta\rangle_{-\tau}$ in
alternate order, and thus no $(-\tau)$-integer lies between $x$
and $x+\Delta$. Therefore $x+\Delta=y$ is the right neighbor of
$x$.

\noindent{\bf Case 1.} Let $\langle
x\rangle_{-\tau}=x_Nx_{N-1}\cdots x_1 0\,\bullet$. We give the
$(-\tau)$-expansion of $x+1$.

  \noindent{\bf Subcase 1.1.} If $x=0$, we add $1=\tau^2-\tau$ to obtain
  $$
  \begin{array}{ccccc@{\ \scriptsize \bullet}}
  \langle x\rangle_{-\tau}&= &0&0 & 0\\
  \langle x+1\rangle_{-\tau}&=&1& 1& 0
  \end{array}
  $$

  \noindent{\bf Subcase 1.2.} If $\langle x\rangle_{-\tau}$ ends in exactly one 0, then we have for $k\geq1$
  $$
  \begin{array}{ccccc@{\ \scriptsize \bullet}}
  \langle x\rangle_{-\tau}&=&\cdots& 1^k & 0\\
  \langle x+1\rangle_{-\tau}&=&\cdots& 1^k& 1
  \end{array}
  $$

  \noindent{\bf Subcase 1.3.} If $\langle x\rangle_{-\tau}$ ends in two 0s, then
  $$
  \begin{array}{ccccccc@{\ \scriptsize \bullet}}
  \langle x\rangle_{-\tau}&=&\cdots& 1 & 1&0&0\\
  \langle x+1\rangle_{-\tau}&=&\cdots& 0& 0&1&1
  \end{array}
  $$

\noindent{\bf Subcase 1.4.} If $\langle x\rangle_{-\tau}$ ends in
an odd number of 0s, $2k+3$, $k\geq0$, then
  $$
  \begin{array}{ccccccccc@{\ \scriptsize \bullet}}
  \langle x\rangle_{-\tau}&=&\cdots& 1 & 1& 0^{2k}&0&0&0\\
  \langle x+1\rangle_{-\tau}&=&\cdots& 1& 1&0^{2k}&1&1&0
  \end{array}
  $$

  \noindent{\bf Subcase 1.5.} If $\langle x\rangle_{-\tau}$ ends in
an even number of 0s, $2k+4$, $k\geq0$, then
  $$
  \begin{array}{cccccccccc@{\ \scriptsize \bullet}}
  \langle x\rangle_{-\tau}&=&\cdots& 1 & 1&0&0^{2k}&0&0&0\\
  \langle x+1\rangle_{-\tau}&=&\cdots& 0& 0&1&0^{2k}&1&1&0
  \end{array}
  $$

  \noindent{\bf Case 2}
Let $\langle x\rangle_{-\tau}=x_Nx_{N-1}\cdots x_1 1\,\bullet$.
Note that $x_1$ is necessarily equal to $1$. We give the
$(-\tau)$-expansion of $x+\tau^{-1}$.

  \noindent{\bf Subcase 2.1.} If $x=-\tau^{-1}$, then
  $$
  \begin{array}{cccc@{\ \scriptsize \bullet}}
  \langle x\rangle_{-\tau}&=& 1 & 1\\
  \langle x+\tau^{-1}\rangle_{-\tau}&=& 0&0
  \end{array}
  $$

  \noindent{\bf Subcase 2.2.} $k\geq0$
  $$
  \begin{array}{cccccccc@{\ \scriptsize \bullet}}
  \langle x\rangle_{-\tau}&=&\cdots& 1 & 1&0^{2k}&1&1\\
  \langle x+\tau^{-1}\rangle_{-\tau}&=&\cdots& 1&1&0^{2k}&0&0
  \end{array}
  $$

  \noindent{\bf Subcase 2.3.} $k\geq0$
  $$
  \begin{array}{ccccccccc@{\ \scriptsize \bullet}}
  \langle x\rangle_{-\tau}&=&\cdots& 0&0 & 1&0^{2k}&1&1\\
  \langle x+\tau^{-1}\rangle_{-\tau}&=&\cdots& 1&1&0&0^{2k}&0&0
  \end{array}
  $$
\pfk

\begin{lem}
Let $x,y\in\Z_{-\tau}$ be consecutive $(-\tau)$-integers, $x<y$.
If $y-x=1$, then
$$
[\tau^2x,\tau^2y] \cap \Z_{-\tau} = \{\tau^2x,\tau^2x+1,
\tau^2x+1+1,\tau^2x+1+1+\tau^{-1} =\tau^2y \}\,.
$$
If $y-x=\tau^{-1}$, then
$$
[\tau^2x,\tau^2y] \cap \Z_{-\tau} = \{\tau^2x,\tau^2x+1,
\tau^2x+1+\tau^{-1} =\tau^2y \}\,.
$$
\end{lem}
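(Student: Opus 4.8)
The plan is to march through the $(-\tau)$-integers of the closed interval $[\tau^2x,\tau^2y]$ from its left endpoint to its right endpoint, repeatedly invoking the preceding lemma, which reads off the distance to the right neighbour of a $(-\tau)$-integer from the last digit of its $(-\tau)$-expansion ($+1$ when the last digit is $0$, $+\tau^{-1}$ when it is $1$) and whose proof moreover exhibits the expansion of that neighbour explicitly. The only extra ingredient is that multiplication by $\tau^2=(-\tau)^2$ shifts a $(-\tau)$-expansion by two places: if $z\in\Z_{-\tau}$ and $\langle z\rangle_{-\tau}=z_N\cdots z_0\,\bullet_{-\tau}$, then $z_N\cdots z_0\,0\,0\,\bullet_{-\tau}$ is a $(-\tau)$-representation of $\tau^2z$ whose underlying infinite word is unchanged, hence admissible by Lemma~\ref{l:zakazane}; so $\langle\tau^2z\rangle_{-\tau}=z_N\cdots z_0\,0\,0\,\bullet_{-\tau}$ and $\tau^2z\in\Z_{-\tau}$. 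In particular $\tau^2x,\tau^2y\in\Z_{-\tau}$. I would also record the identities $\tau^2=2+\tau^{-1}$, $\tau=1+\tau^{-1}$, $\tau^2-2=\tau^{-1}$ (so that the two claimed sets do span intervals of length $\tau^2(y-x)$), and the fact that $\tau^{-1}=\Delta_1$ is the minimal distance between distinct $(-\tau)$-integers, immediate from the preceding lemma since every consecutive distance is $\Delta_0=1$ or $\Delta_1=\tau^{-1}$.

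Write $\langle x\rangle_{-\tau}=x_N\cdots x_0\,\bullet_{-\tau}$ (with $\langle 0\rangle_{-\tau}=0\,\bullet_{-\tau}$). By the preceding lemma $y-x=1$ iff $x_0=0$ and $y-x=\tau^{-1}$ iff $x_0=1$. In both cases $\langle\tau^2x\rangle_{-\tau}=x_N\cdots x_0\,0\,0\,\bullet_{-\tau}$ ends in $0$, so the right neighbour of $\tau^2x$ is $\tau^2x+1\in\Z_{-\tau}$, and the crux is the last digit of $\langle\tau^2x+1\rangle_{-\tau}$, which I would extract from the proof of the preceding lemma applied with $\tau^2x$ in place of $x$. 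If $x_0=1$, then $\langle x\rangle_{-\tau}$ ends in $1$, whence $x_1=1$ as well (as noted there), so $\langle\tau^2x\rangle_{-\tau}$ ends in $\cdots 1\,1\,0\,0\,\bullet_{-\tau}$; the subcase of that proof treating an expansion ending in exactly two zeros rewrites this, upon adding $1$, to $\cdots 0\,0\,1\,1\,\bullet_{-\tau}$, so $\langle\tau^2x+1\rangle_{-\tau}$ ends in $1$. If $x_0=0$, then $\langle\tau^2x\rangle_{-\tau}$ is either $0\,\bullet_{-\tau}$ (when $x=0$) or ends in at least three zeros preceded by $11$ — the preceding $11$ being forced since $\langle x\rangle_{-\tau}$ may not end in $0\,1\,0^\omega$ — and in each of the corresponding subcases of that proof the resulting expansion of $\tau^2x+1$ ends in $0$.

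It remains to chain the right neighbours up to $\tau^2y$. If $y-x=\tau^{-1}$: since $\langle\tau^2x+1\rangle_{-\tau}$ ends in $1$, the right neighbour of $\tau^2x+1$ is $\tau^2x+1+\tau^{-1}=\tau^2x+\tau=\tau^2y$, which is the right endpoint, so $[\tau^2x,\tau^2y]\cap\Z_{-\tau}=\{\tau^2x,\ \tau^2x+1,\ \tau^2y\}$. If $y-x=1$: since $\langle\tau^2x+1\rangle_{-\tau}$ ends in $0$, the right neighbour of $\tau^2x+1$ is $\tau^2x+2\in\Z_{-\tau}$; and as $\tau^2y-(\tau^2x+2)=\tau^2-2=\tau^{-1}$ equals the minimal distance between distinct $(-\tau)$-integers, no $(-\tau)$-integer lies strictly between $\tau^2x+2$ and $\tau^2y$, so the latter is the right neighbour of the former, giving $[\tau^2x,\tau^2y]\cap\Z_{-\tau}=\{\tau^2x,\ \tau^2x+1,\ \tau^2x+2,\ \tau^2y\}$ with $\tau^2y=\tau^2x+2+\tau^{-1}$. (As a cross-check on the counts: writing $\tau^2(y-x)=p\Delta_0+q\Delta_1$ with $p,q\ge 0$ and multiplying through by $\tau$ forces $(p,q)=(2,1)$, resp.\ $(1,1)$, by the $\Z$-linear independence of $1$ and $\tau$ — precisely $\varphi(\Delta_0)=\Delta_0\Delta_0\Delta_1$, resp.\ $\varphi(\Delta_1)=\Delta_0\Delta_1$.)

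The step I expect to be the real work is the second paragraph: one must track not merely \emph{a} $(-\tau)$-representation of $\tau^2x+1$ but its actual $(-\tau)$-expansion, i.e.\ check that removing forbidden strings from $x_N\cdots x_1\,1\,1\,0\,\bullet_{-\tau}$ never perturbs the last digit. Leaning on the explicit rewrites in the proof of the preceding lemma is the cleanest route; a self-contained alternative is to run the reduction algorithm of Lemma~\ref{lem:pisotminusn} with $m=n=1$ and observe that a forbidden block $1\,0^{2k+1}\,1$ can put neither of its two $1$'s among the three lowest digits $1\,1\,0$ (the interior $0$'s would have to be genuine zeros, while the digit at $(-\tau)^1$ is $1$), and that for $m=n=1$ no rewriting step creates a digit below $(-\tau)^0$; hence those bottom digits survive the whole reduction, and in particular $\langle\tau^2x+1\rangle_{-\tau}$ ends in $0$.
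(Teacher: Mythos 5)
Your proof is correct, but it takes a genuinely different route from the paper's. The paper never works with $\tau^2x$ directly: it multiplies by $-\tau$ \emph{once}, which reverses order and sends the pair $x<y$ to $-\tau y<-\tau x$; since $\langle -\tau y\rangle_{-\tau}$ ends in $0$, the right neighbour of $-\tau y$ is $-\tau y+1$, and the leftover distance to the other endpoint $-\tau x$ is then either $0$ (when $y-x=\tau^{-1}$) or exactly the minimal gap $\tau^{-1}$ (when $y-x=1$), so the intersection $[-\tau y,-\tau x]\cap\Z_{-\tau}$ is pinned down by arithmetic alone; applying this one-step rule twice yields the statement. The advantage of that single-shift iteration is precisely that it sidesteps the step you correctly identify as the real work: because the intermediate interval has length only $1$ or $\tau$, the gap pattern is forced without ever determining the last digit of an intermediate point such as $-\tau y+1$. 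Your two-place shift leaves the genuine ambiguity $\tau^2=\Delta_0+\Delta_0+\Delta_1=\Delta_0+\Delta_1+\Delta_0$, which you resolve by reading off the last digit of $\langle\tau^2x+1\rangle_{-\tau}$ from the explicit rewrites of the preceding lemma --- a heavier but valid and self-contained argument. One small imprecision: when $x_0=0$ and $x\neq 0$, the trailing block of zeros of $\langle\tau^2x\rangle_{-\tau}$ need not be preceded by $11$ (e.g.\ $\langle-\tau\rangle_{-\tau}=10\bullet_{-\tau}$ gives $\langle-\tau^3\rangle_{-\tau}=1000\bullet_{-\tau}$, where the leading $1$ is the only nonzero digit), so the ``forced $11$'' remark is not literally true; this is harmless, since the rewrites of Subcases 1.1, 1.4, 1.5 act only on the trailing zeros and produce a final digit $0$ in every case, which is all you use.
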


\pfz Let first $y-x=1$.  The $(-\tau)$-expansion of $-\tau y$
ends obviously in $0$, and thus the right neighbor of $-\tau y$
among $(-\tau)$-integers is $-\tau y +1$. Since $-\tau x = -\tau y
+\tau = (-\tau y + 1) + \tau^{-1}$ is also a $(-\tau)$-integer, we
have
\begin{equation}\label{eq:8}
[-\tau y,-\tau x] \cap \Z_{-\tau} = \{-\tau y,-\tau y+1, -\tau y +
1 + \tau^{-1}= -\tau x \}\,.
\end{equation}

Let now $y-x=\tau^{-1}$.  The $(-\beta)$-expansion of $-\tau y$
ends again in $0$, and thus the right neighbor of $-\tau y$ among
$(-\tau)$-integers is $-\tau y +1 = -\tau x$. Therefore
\begin{equation}\label{eq:9}
[-\tau y,-\tau x] \cap \Z_{-\tau} = \{-\tau y,-\tau y+1= -\tau x
\}\,.
\end{equation}
Applying the rules~\eqref{eq:8} and~\eqref{eq:9} twice, we obtain
the result.
 \pfk

Writing the distances between consecutive $(-\tau)$-integers by
symbols $\Delta_0,\Delta_1$, the above lemma states that the
infinite word
$$
u_{-\tau} = \cdots
\Delta_1\Delta_0\Delta_1|\Delta_0\Delta_0\Delta_1\Delta_0\cdots
$$
coding $(-\tau)$-integers is invariant under the
morphism~\eqref{eq:subst}, i.e.\
\begin{equation}\label{eq:bidir}
u_{-\tau} = \cdots \Delta_0\Delta_1|\Delta_0\Delta_0\Delta_1\cdots
= \cdots
\varphi(\Delta_0)\varphi(\Delta_1)|\varphi(\Delta_0)\varphi(\Delta_0)\varphi(\Delta_1)\cdots
\end{equation}
We have thus shown the following theorem.

\begin{thm}
$\Z_{-\tau}\cap [0,+\infty) = \Z_{\tau^2} \cap [0,+\infty)$.
\end{thm}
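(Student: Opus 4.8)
The plan is to read the statement off from the combinatorial description already assembled in this section: both sets of non-negative integers are obtained from the single point $0$ by laying out the same doubly-determined sequence of gaps. First I would recall, following~\cite{fabre}, that the word $u_{\tau^2}=\Delta_0\Delta_0\Delta_1\Delta_0\cdots$ coding the distances between consecutive non-negative $(\tau^2)$-integers is a fixed point of the morphism $\varphi$ of~\eqref{eq:subst}, and moreover is the \emph{unique} fixed point of $\varphi$ that begins with the letter $\Delta_0$: since $\varphi$ is non-erasing and $\varphi(\Delta_0)=\Delta_0\Delta_0\Delta_1$ begins with $\Delta_0$ and has length $3>1$, the morphism is prolongable on $\Delta_0$, and the fixed point $\lim_{n\to\infty}\varphi^n(\Delta_0)$ is determined. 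This uniqueness is the hinge of the argument.

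Next I would feed in the two lemmas of this section. The first one shows that every gap between consecutive $(-\tau)$-integers equals $\Delta_0=1$ or $\Delta_1=\tau^{-1}$; hence the non-negative $(-\tau)$-integers form an increasing sequence $0=t_0<t_1<t_2<\cdots$, its consecutive-distance word $w=w_1w_2w_3\cdots$ over $\{\Delta_0,\Delta_1\}$ is well defined, and, because every letter of $w$ is at least $\tau^{-1}>0$, the sequence $(t_n)$ is unbounded, so it exhausts $\Z_{-\tau}\cap[0,+\infty)$. The second lemma, reformulated in~\eqref{eq:bidir}, says exactly that the part of $u_{-\tau}$ to the right of the delimiter --- namely $w$ --- begins with $\Delta_0\Delta_0\Delta_1\Delta_0$ and satisfies $\varphi(w)=w$. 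By the uniqueness recalled above, $w=u_{\tau^2}$.

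It then remains to note that both $\Z_{-\tau}\cap[0,+\infty)$ and $\Z_{\tau^2}\cap[0,+\infty)$ have least element $0$ and, listed in increasing order, have consecutive terms separated by the same word $w=u_{\tau^2}$; an immediate induction on $n$ gives that the $n$-th smallest element of either set equals the partial sum $\sum_{i=1}^{n}w_i$, so the two sets coincide, which is the theorem. The only step that needs genuine care is the uniqueness of the $\varphi$-fixed point beginning with $\Delta_0$, so that the two $\varphi$-invariant words are truly equal and not merely both invariant; the rest is bookkeeping, the substantive combinatorial work having already been carried out in the two preceding lemmas.
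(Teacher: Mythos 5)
Your proposal is correct and follows essentially the same route as the paper: both rest on the two lemmas of this section, deduce that the gap word $w$ of the non-negative $(-\tau)$-integers satisfies $\varphi(w)=w$ and starts with $\Delta_0$, and identify it with $u_{\tau^2}$. You merely make explicit two points the paper leaves implicit --- the uniqueness of the $\varphi$-fixed point prolongable from $\Delta_0$, and the final partial-sum bookkeeping --- which is a welcome clarification rather than a different argument.
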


Note that the theorem connects only the non-negative part of $(-\tau)$- and $(\tau^2)$-integers. This is because whereas
$\Z_{\tau^2}$ is in some sense `artificially' defined on the negative half-line,  negative $(-\tau)$-integers are defined naturally.
The bidirectional infinite word $u_{-\tau}$ of~\eqref{eq:bidir} can be seen by the bidirectional limit $\lim_{n\to\infty}\varphi^n(1)|\varphi^n(0)$.
The same is not true for the bidirectional word~\eqref{eq:bidirpositive} coding the $(\tau^2)$-integers over all real line.
Such phenomena could be, for example, used to solve problems mentioned in~\cite{BuFrGaKr}.

\section{Conclusions and open problems}

The present paper answers several questions raised in~\cite{MaPeVa} about arithmetics in numeration systems with negative base $-\beta$ where
$\beta$ is a quadratic Pisot number. Nevertheless, other problems about arithmetical properties of such systems remain unsolved, such as efficient algorithms for performing addition and multiplication, description of numbers with purely periodic $(-\beta)$-expansion, etc.

There are also other aspects of numeration in non-standard systems that deserve to be explored.
Properties, known for R\'enyi numeration with positive base, likely to hold also for the case of negative base, are for example the description of
the distribution of $(-\beta)$-integers, given for positive base in~\cite{asymptotika}, or the connection of $(-\beta)$-numeration to substitution dynamical systems established for $\beta$-integers in~\cite{fabre}. Many such properties are studied in~\cite{KalleSteiner} for number systems
with positive base which are generalizations of the R\'enyi numeration.

It may, however, happen that the analogues for negative base will not be straightforward. Although numerical experiments suggest that the analogues of certain properties are valid, the classical methods for proofs fail. An example of such a situation is the study of existence of morphisms
generating the set of $(-\beta)$-integers performed by different methods in~\cite{ADMP} and~\cite{SteinerSubstituce}.

The relation of numeration with positive base and negative base should be further studied. In here, we show a -- in the general context rather surprising -- coincidence between integers in the numeration systems with base $-\tau$, where $\tau$ is the golden ratio, and integers in the system with positive base $\tau^2$. Such a simple relation, however, cannot be expected even in case of other quadratic bases. For, as shown in~\cite{ADMP}, the distances between consecutive $(-\beta)$-integers need not be smaller than 1, and it is
even not obvious whether they are in general bounded.


\section*{Acknowledgement}

We acknowledge financial support by the Czech Science Foundation
grant 201/09/0584 and by the grants MSM6840770039 and LC06002 of
the Ministry of Education, Youth, and Sports of the Czech
Republic. The work was also partially supported by the CTU student grant SGS10/085/OHK4/1T/14.




\begin{thebibliography}{99}

%

\bibitem{ADMP} P.~Ambro\v z, D.~Dombek, Z.~Mas\'akov\'a, E.~Pelantov\'a,
{\it Numbers with integer expansion in the numeration system with negative base},
preprint 2009, 13pp. {\tt http://arxiv.org/abs/0912.4597}



\bibitem{asymptotika} L.\ Balkov\'a, J.-P.\ Gazeau, E. Pelantov\'a, {\it Asymptotic behavior of beta-integers}, Lett. Math. Phys. {\bf 84} (2008), 179--198.

\bibitem{Bassino}
F.~Bassino, {\it $\beta$-expansions for cubic Pisot numbers},  5th Latin American Theoretical INformatics Symposium
(LATIN'02), {\bf 2286} LNCS.  Cancun, Mexico.  April,  2002. pp. 141-–152. Springer-Verlag.


\bibitem{Bernat} J. Bernat, {\it Arithmetics in $\beta$-numeration}, Discr. Math. Theor. Comp. Sci.  {\bf 9} (2007), 85--106.


\bibitem{BuFrGaKr} \v{C}.~Burd\'\i k, Ch.~Frougny, J.~P.~Gazeau, R.~Krejcar
{\it Beta-Integers as Natural Counting Systems for Quasicrystals ,}
J.~Phys. A: Math. Gen. {\bf 31} (1998), 6449--6472.

\bibitem{fabre} S.\ Fabre, {\it Substitutions et $\beta$-syst\`emes de num\'eration}, Theoret. Comput. Sci. {\bf 137}  (1995),  219--236.

\bibitem{Fru} Ch.\ Frougny, {\it On-line addition in real base}, Proceedings of MFCS 1999, Lectures Notes in Computer Science {\bf 1672} (1999), 1--11.

\bibitem{ChiaraFrougny}
Ch.~Frougny and A.~C.~Lai. {\it On negative bases}, Proceedings of DLT 09, Lectures Notes in Computer Science, 5583 (2009), 252--263.

\bibitem{FruSo} Ch.~Frougny and B.~Solomyak, {\it Finite
$\beta$-expansions,} Ergodic Theory Dynamical Systems {\bf 12}
(1994), 713--723.

\bibitem{FruSu} Ch.~Frougny, A.~Surarerks, {\it On-line multiplication in real and complex base}, Proceedings of IEEE ARITH {\bf 16},
I.E.E.E. Computer Society Press (2003), 212--219.

\bibitem{GuMaPeAA} L.\ S.\ Guimond, Z.\ Mas\'akov\'a, E.\ Pelantov\'a, {\it Arithmetics of beta-expansions},
Acta Arith. {\bf 112} (2004), 23--40.

\bibitem{ItoSadahiro}
  S.~Ito and T.~Sadahiro,
  {\it $(-\beta)$-expansions of real numbers},
  Integers  {\bf 9} (2009), 239--259.

\bibitem{KalleSteiner}
C.~Kalle, W.~Steiner,
{\it Beta-expansions, natural extensions and multiple tilings associated with Pisot units} to appear in Trans. Amer. Math. Soc., (2010).


\bibitem{MaPeVa} Z.~Mas\'akov\'a, E.~Pelantov\'a, T.~V\'avra, {\it Arithmetics in number systems with negative base}, preprint 2010, 13pp.
submitted. {\tt http://arxiv.org/abs/1002.1009}


\bibitem{mazenc} C.\ Mazenc, {\it On the redundancy of real number representation systems}, Laboratoire de l'informatique du parallélisme, Research Report N$^o$ 93-16,
{\tt http://hdl.handle.net/2332/1183}

\bibitem{Parry} W.~Parry, {\it On the $\beta$-expansions of real numbers,}
Acta Math. Acad. Sci. Hung. {\bf 11} (1960), 401--416.

\bibitem{Renyi}
A. R\'enyi, {\it Representations for real numbers and their
ergodic properties,} Acta Math. Acad. Sci. Hung. {\bf 8} (1957),
477--493.

\bibitem{schmidt} K. Schmidt, {\it On periodic expansions of Pisot numbers and Salem numbers},
 Bull. London Math. Soc.  {\bf 12}  (1980), 269--278.

\bibitem{SteinerSubstituce} W.~Steiner, {\it On the structure of $(-\beta)$-integers}, preprint 2010, 15pp.

\bibitem{Thurston}
  W.\ P. Thurston,
  {\it Groups, tilings, and finite state automata},
  AMS Colloquium Lecture Notes, American Mathematical Society, Boulder, 1989.

\end{thebibliography}
\end{document}